\declaretheoremstyle[headfont=\normalfont]{normalhead}
\newtheorem{lemma}{Lemma}[section]
\newtheorem{theorem}[lemma]{Theorem}
\newtheorem{proposition}[lemma]{Proposition}
\newtheorem{corollary}[lemma]{Corollary}
\newtheorem{question}{Question}
\newcounter{mt}
\newcommand{\R}{\mathbb{R}}
\newcommand{\C}{\mathbb{C}}
\newcommand{\U}{\mathrm{U}}
\DeclareMathOperator{\Val}{Val}
\DeclareMathOperator{\vol}{vol}
\DeclareMathOperator{\supp}{supp}
\DeclareMathOperator{\GL}{GL}
\DeclareMathOperator{\GW}{\mathrm{GW}}
\DeclareMathOperator{\Sym}{\mathrm{Sym}}
\DeclareMathOperator{\tr}{\mathrm{tr}}
\author{Jonas Knoerr}
\title{Smooth valuations on convex bodies and finite linear combinations of mixed volumes}
\date{}
\newcommand{\Addresses}{{
		\bigskip
		\footnotesize
		
		Jonas Knoerr, \textsc{Institute of Discrete Mathematics and Geometry, TU Wien, Wiedner Hauptstrasse 8-10, 1040 Wien, Austria}\par\nopagebreak
		\textit{E-mail address}: \texttt{jonas.knoerr@tuwien.ac.at}
		
		\medskip
	}}
\def\blfootnote{\xdef\@thefnmark{}\@footnotetext}
\begin{document}
\maketitle
\begin{abstract}
	It is shown that Alesker's solution of McMullen's conjecture implies the following stronger version of the conjecture: Every continuous, translation invariant, $k$-homogeneous valuation on convex bodies in $\R^n$ can be approximated uniformly on compact subsets by finite linear combinations of  mixed volumes involving at most $N_{n,k}$ summands, where $N_{n,k}$ is a constant depending on $n$ and $k$ only.
	Moreover, $n-k-1$ of the arguments of the mixed volumes can be chosen to be ellipsoids that do not depend on the valuation. The result is based on a corresponding description of smooth valuations in terms of finite linear combinations of mixed volumes.
\end{abstract}
\blfootnote{2020 \emph{Mathematics Subject Classification}. 52B45, 52A20, 52A39.\\
	\emph{Key words and phrases}. Valuation, mixed volumes, mixed area measure.\\}

\section{Introduction}
	A real-valued map $\mu$ defined on the space of convex bodies $\mathcal{K}(\R^n)$, that is, the set of all non-empty, compact, and convex subsets of $\R^n$ equipped with the Hausdorff metric, is called a \emph{valuation} if
	\begin{align*}
		\mu(K)+\mu(L)=\mu(K\cup L)+\mu(K\cap L)
	\end{align*}
	for all $K,L\in\mathcal{K}(\R^n)$ such that $K\cup L$ is convex. The class of valuations includes many important geometric functionals, for example Euler characteristic, surface area, and volume, as well as mixed volumes. Here, the mixed volume $V(K_1,\dots,K_{n})$ of convex bodies $K_1,\dots,K_n\in\mathcal{K}(\R^n)$ is given by the corresponding coefficient of the polynomial
	\begin{align*}
		\vol_n(\lambda_1K_1+\dots+\lambda_nK_n)=\sum_{i_1,\dots,i_n=1}^n \lambda_{i_1}\dots\lambda_{i_n} V(K_{i_1},\dots,K_{i_n}),\quad \lambda_1,\dots,\lambda_n\ge 0,
	\end{align*}
	where $\vol_n$ denotes the Lebesgue measure on $\R^n$. More precisely, $K\mapsto V(K[k],L_1,\dots,L_{n-k})$ defines a continuous and translation invariant valuation on $\mathcal{K}(\R^n)$ for each choice of reference bodies $L_1,\dots,L_{n-k}\in\mathcal{K}(\R^n)$, where $K$ is taken with multiplicity $k$.\\
	
	Following a number of breakthrough results in the 1990s, the theory of valuations on convex bodies has developed rapidly, with far reaching generalizations of classical results and new applications in integral geometry and other disciplines of mathematics. For an overview of these developments we refer to Alesker and Fu \cite{AleskerFuIntegralgeometryvaluations2014}, and Bernig \cite{BernigAlgebraicintegralgeometry2011}.	Many of these developments were sparked by Alesker's solution of a conjecture by McMullen, which states that mixed volumes span a dense subspace of the space $\Val(\R^n)$ of all continuous and translation invariant valuations on $\mathcal{K}(\R^n)$ with respect to the topology of uniform convergence on compact subsets in $\mathcal{K}(\R^n)$.\\
	
	Alesker obtained an affirmative solution to McMullen's conjecture from a much more general description of the space $\Val(\R^n)$. Let $\Val_k(\R^n)$ denote the subspace of $\Val(\R^n)$ of $k$-homogeneous valuations, that is, the space of all $\mu\in\Val(\R^n)$ that satisfy $\mu(tK)=t^k\mu(K)$ for all $K\in\mathcal{K}(\R^n)$, $t\ge0$. By a result by McMullen \cite{McMullenValuationsEulertype1977}, we have a direct sum decomposition 
	\begin{align*}
		\Val(\R^n)=\bigoplus_{k=0}^n\Val_k(\R^n).
	\end{align*}
	This decomposition implies in particular that $\Val(\R^n)$ is a Banach space with respect to the norm
	\begin{align*}
		\|\mu\|:=\sup_{K\subset B_1(0)}|\mu(K)|,
	\end{align*}
	which metrizes the topology of uniform convergence on compact subsets (note that the set $\{K\in\mathcal{K}(\R^n):K\subset B_1(0)\}$ is compact due to the Blaschke Selection Theorem). We have a natural continuous operation $\pi$ of $\GL(n,\R)$ on $\Val(\R^n)$ given by
	\begin{align*}
		[\pi(g)\mu](K)=\mu(g^{-1}K)\quad\text{for}~K\in\mathcal{K}(\R^n),~g\in\GL(n,\R).
	\end{align*}
	Under this operation, the space $\Val_k(\R^n)$ further decomposes into a direct sum $\Val_k^+(\R^n)\oplus\Val_k^-(\R^n)$ of closed subspaces of even/odd valuations, where $\mu\in\Val(\R^n)$ is called even/odd if $\mu(-K)=\pm\mu(K)$ for $K\in\mathcal{K}(\R^n)$. We have the following representation theoretic description of $\Val(\R^n)$, which is known as the \emph{Irreducibility Theorem}. 
	\begin{theorem}[Alesker \cite{AleskerDescriptiontranslationinvariant2001}]
		\label{theorem:IrredicibilityTheorem}
		The natural representation of $\GL(n,\R)$ on $\Val_k^{\pm}(\R^n)$ is irreducible.
	\end{theorem}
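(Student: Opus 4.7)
My plan is to reduce the theorem to an algebraic statement about Harish-Chandra modules and then to analyze the resulting module via a geometric embedding into sections over a Grassmannian. First I would pass to smooth vectors: the natural action of $G = \GL(n,\R)$ on $\Val_k^{\pm}(\R^n)$ is continuous on a Banach space, and the subspace of smooth vectors $\Val_k^{\pm}(\R^n)^\infty$ forms a Fréchet space on which $G$ acts smoothly. By the Casselman-Wallach globalization theorem, closed $G$-invariant subspaces of $\Val_k^{\pm}(\R^n)$ correspond bijectively to $(\mathfrak{g},K)$-submodules of the Harish-Chandra module of $K$-finite smooth vectors, where $K$ is a maximal compact subgroup of $G$. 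Hence it suffices to prove that this Harish-Chandra module is irreducible, which reduces to excluding proper nonzero submodules (admissibility should follow once one checks that each $\SO(n)$-isotypic component is finite-dimensional).

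Next I would transfer the even case to analysis on the Grassmannian via the Klain embedding. For $\mu \in \Val_k^+(\R^n)$, the restriction of $\mu$ to convex bodies contained in a $k$-plane $E \in \Gr_k(\R^n)$ is a continuous translation-invariant $k$-homogeneous valuation on $\mathcal{K}(E)$, and since the top-degree piece $\Val_k(E) = \R\cdot\vol_k|_E$ is one-dimensional, this restriction must be a scalar multiple $\mathrm{Kl}_\mu(E)\,\vol_k|_E$ of Lebesgue measure on $E$. The resulting assignment $\mu \mapsto \mathrm{Kl}_\mu$ is an injective $G$-equivariant map into sections of a natural line bundle over $\Gr_k(\R^n)$, and its image ought to be characterizable as the range of a cosine-type integral transform (which arises naturally from mixed volumes involving many copies of a segment). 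I would then decompose $C^\infty(\Gr_k(\R^n))$ into $\SO(n)$-isotypic components and compute the eigenvalues of the cosine transform on each, in order to identify precisely which $(\mathfrak{g},K)$-module arises as the image.

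For the odd case the Klain map vanishes identically, so I would replace it by Schneider's embedding via differences of support functions, obtaining an analogous $G$-equivariant map into sections of an appropriate line bundle over the oriented Grassmannian $\Gr_k^+(\R^n)$. A parallel spectral analysis of the corresponding integral transform on the doubled Grassmannian would then have to be carried out.

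The decisive obstacle will be the second step: showing that the range of the cosine transform (and of its odd analogue) realizes precisely \emph{one} irreducible $(\mathfrak{g},K)$-module. This demands both the branching of $C^\infty(\Gr_k(\R^n))$ into $\SO(n)$-types and the spectrum of the cosine transform on each isotypic component, neither of which is accessible by elementary convex-geometric means; one is forced to invoke harmonic analysis on real reductive groups and the detailed structure of the associated spherical representations. The odd case looks more delicate still, since no $\SO(n)$-fixed valuation analogous to the intrinsic volume $V_k$ is available to pin down a natural candidate highest weight vector.
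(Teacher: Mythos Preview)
The paper does not prove this theorem; it is quoted as a result of Alesker \cite{AleskerDescriptiontranslationinvariant2001} and used as a black box throughout (the present paper only remarks that the smooth and Banach versions of irreducibility are equivalent for this type of representation, citing \cite{HarishChandraRepresentationssemisimpleLie1953}). There is therefore no proof in the paper against which to compare your proposal.

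That said, your outline is broadly in the spirit of Alesker's original argument: the Klain embedding for the even case and Schneider's embedding for the odd case are indeed the starting points, and passage to the underlying $(\mathfrak{g},K)$-module is the correct framework. Two comments on the details. First, the bijection between closed $G$-invariant subspaces and $(\mathfrak{g},K)$-submodules already follows from Harish-Chandra's density and admissibility theorems; Casselman--Wallach concerns uniqueness of smooth globalizations and is not what is needed at that step (the present paper does invoke Casselman--Wallach, but for a different purpose, namely the surjectivity in Proposition~\ref{proposition:ThetaMixedVolumes}). Second, your plan to pin down the image of the Klain map by computing the spectrum of the cosine transform on every $\SO(n)$-isotypic component is not how Alesker actually proceeds: he identifies the target with a degenerate principal series induced from a maximal parabolic and then appeals to the known composition series of such representations (work of Howe--Lee and related results), so the irreducibility is imported from representation theory rather than extracted from a direct spectral computation. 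The obstacle you flag at the end is therefore real, and the resolution in the literature is precisely to invoke those structural results rather than to carry out the eigenvalue analysis you propose.
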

	Here, a continuous representation of $\GL(n,\R)$ is called irreducible if every non-trivial, $\GL(n,\R)$-invariant subspace is dense. This result directly implies McMullen's conjecture, as it is not difficult to see that the space spanned by mixed volumes is $\GL(n,\R)$-invariant and intersects $\Val_k^\pm(\R^n)$ non-trivially. As pointed out by Alesker \cite{AleskerDescriptiontranslationinvariant2001}, one can even restrict the class of reference bodies $L_1,\dots,L_{n-k}$ in the mixed volumes $K\mapsto V(K[k],L_1,\dots,L_{n-k})$ to smaller $\GL(n,\R)$-invariant families of convex bodies, for example ellipsoids in the case of even valuations, or simplices in the general case.\\

	In this article we consider the following refinement of McMullen's conjecture in terms of the size of the linear combination of mixed volumes required for the approximation. 
	\begin{theorem}
	\label{theorem:densityFiniteComb}
	Let $N:=\binom{n+1}{2}+1$. There exist ellipsoids $\mathcal{E}_{1},\dots,\mathcal{E}_{N}$ such that for every $\mu\in\Val_k(\R^n)$ there exist sequences $(L^\pm_{\alpha,j})_j$ of smooth convex bodies with strictly positive Gauss curvature, $\alpha\in \mathbb{N}^N$ with $|\alpha|=n-k-1$,  such that 
	\begin{align*}
		\mu(K)=\lim\limits_{j\rightarrow\infty}\sum_{\substack{\alpha\in \mathbb{N}^{N}\\|\alpha|=n-k-1}}V(K[k],L^+_{\alpha,j},\mathcal{E}_{1}[\alpha_1],\dots,\mathcal{E}_{N}[\alpha_N])-V(K[k],L^-_{\alpha,j},\mathcal{E}_{1}[\alpha_1],\dots,\mathcal{E}_{N}[\alpha_N])
	\end{align*}
	holds uniformly on compact subsets of $\mathcal{K}(\R^n)$. In particular, every such valuation can be approximated by finite linear combinations of at most $2\binom{\binom{n+1}{2}+n-k-1}{n-k-1}$ mixed volumes.
	\end{theorem}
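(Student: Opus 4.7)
The plan is to split the proof of Theorem~\ref{theorem:densityFiniteComb} into two parts: an exact representation result for smooth valuations, and a density argument extending this to general continuous valuations.

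For the smooth case, I would establish that for each $\mu\in\Val_k^{\mathrm{sm}}(\R^n)$ there exist smooth convex bodies $L_\alpha^\pm$ of strictly positive Gauss curvature, indexed by multi-indices $\alpha\in\mathbb{N}^N$ with $|\alpha|=n-k-1$, such that the displayed sum in the theorem equals $\mu$ \emph{exactly}, for a fixed choice of ellipsoids $\mathcal{E}_1,\ldots,\mathcal{E}_N$. This is the ``description of smooth valuations in terms of finite linear combinations of mixed volumes'' mentioned in the abstract. The starting point is a representation via mixed area measures: every smooth $\mu$ can be written as $\mu(K)=\int_{S^{n-1}}f(u)\,dS(K[k],M_1,\ldots,M_{n-k-1},u)$ for some smooth convex bodies $M_i$ of positive Gauss curvature and some smooth $f$ on the sphere; writing $f=h_{L^+}-h_{L^-}$ with $L^\pm$ smooth of positive Gauss curvature (obtained by adding a large multiple of the support function of the unit ball) converts the integral into a difference of two mixed volumes with one varying body and auxiliary bodies $M_1,\ldots,M_{n-k-1}$.

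The key remaining task is to replace the arbitrary bodies $M_i$ by Minkowski combinations of the fixed ellipsoids. The dimensional identity $N=\binom{n+1}{2}+1$ reflects the fact that origin-centered ellipsoids in $\R^n$ are parametrized by positive definite symmetric bilinear forms, a $\binom{n+1}{2}$-dimensional open cone in $\Sym^2(\R^n)^*$; for generic $\mathcal{E}_1,\ldots,\mathcal{E}_N$ whose parameter matrices form an affine basis of $\Sym^2(\R^n)^*$, a polarization argument combined with the multilinearity of mixed volumes with respect to Minkowski addition allows one to expand the symmetric tensor product $M_1\odot\cdots\odot M_{n-k-1}$ in terms of the monomials $\mathcal{E}_1^{\odot\alpha_1}\odot\cdots\odot\mathcal{E}_N^{\odot\alpha_N}$ with $|\alpha|=n-k-1$, absorbing the non-ellipsoidal residual contributions into the single varying body $L_\alpha^\pm$ attached to each multi-index. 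The number of multi-indices equals $\binom{\binom{n+1}{2}+n-k-1}{n-k-1}$, yielding the claimed bound after the factor of two for the sign split.

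For general $\mu\in\Val_k(\R^n)$, Alesker's standard smoothing -- convolution with smooth compactly supported probability measures on $\GL(n,\R)$ -- produces a sequence $(\mu_j)_j\subset\Val_k^{\mathrm{sm}}(\R^n)$ converging to $\mu$ uniformly on compact subsets of $\mathcal{K}(\R^n)$. Applying the exact smooth representation to each $\mu_j$ yields the sequences $(L_{\alpha,j}^\pm)_j$ required by the theorem. The main obstacle is the polarization step above: because the support function $h_{\mathcal{E}_A}(u)=\sqrt{u^\top Au}$ of an ellipsoid depends nonlinearly on its defining matrix $A$, the decomposition of arbitrary smooth bodies $M_i$ as ellipsoidal combinations cannot be carried out directly at the level of support functions but must be organized at the level of symmetric tensors parametrizing mixed area measures, with nonlinear residual contributions carefully absorbed into the single varying body per multi-index.
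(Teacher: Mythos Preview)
Your overall strategy---prove an exact representation for smooth valuations, then use density of $\Val_k(\R^n)^{sm}$ in $\Val_k(\R^n)$---matches the paper. But two steps in the smooth case are genuine gaps.

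First, your starting point ``every smooth $\mu$ can be written as $\mu(K)=\int_{S^{n-1}}f\,dS(K[k],M_1,\ldots,M_{n-k-1})$'' asserts a \emph{single-term} representation with one fixed tuple $M_1,\ldots,M_{n-k-1}$. Alesker's surjectivity result does not give this: it says the map $\Theta_{n-k}:C^\infty((S^{n-1})^{n-k})\to\Val_k(\R^n)^{sm}\otimes\C$ is onto, so a given $\mu$ is the image of some $F\in C^\infty((S^{n-1})^{n-k})$, but $F$ is in general not a simple tensor $f\otimes h_{M_1}\otimes\cdots\otimes h_{M_{n-k-1}}$. The paper handles this by a quantitative Schwartz-kernel-type decomposition (Lemma~\ref{lemma:SchwartzKernelDecompFunction}), writing $F=\sum_j f_1^j\otimes\cdots\otimes f_{n-k}^j$ as an \emph{infinite series} of simple tensors with summable products of $C^m$-norms, and then has to check that the subsequent manipulations preserve convergence so that the final coefficient functions $g_\alpha$ are smooth.

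Second, the polarization step is, as you yourself flag, obstructed by the nonlinearity of $A\mapsto h_{\mathcal{E}_A}(u)=\sqrt{u^\top Au}$. Mixed volumes are linear in support functions, but support functions of ellipsoids do \emph{not} span $C^\infty(S^{n-1})$, so one cannot expand $h_{M_i}$ in the $h_{\mathcal{E}_s}$. The paper's mechanism works one derivative lower: for smooth bodies the density of the mixed area measure is the mixed discriminant $D_{n-1}(D^2h_K[k],D^2f_2^j,\ldots,D^2f_{n-k}^j)$, which is multilinear in the \emph{restricted Hessians} $D^2f_l^j(x)\in\Sym^2(T_xS^{n-1})$. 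The key lemma (Lemma~\ref{lemma:ChoiceEllipsoids}) constructs $N$ ellipsoids so that $D^2h_{\mathcal{E}_1}(x),\ldots,D^2h_{\mathcal{E}_N}(x)$ span $\Sym^2(T_xS^{n-1})$ for \emph{every} $x\in S^{n-1}$; this pointwise spanning lets one write $D^2f_l^j(x)=\sum_s\Psi_s(x)\,D^2h_{\mathcal{E}_s}(x)$ with smooth coefficients $\Psi_s$, which then get absorbed into the scalar density $g_\alpha$. Your criterion ``parameter matrices form an affine basis of $\Sym^2(\R^n)^*$'' concerns the $n\times n$ matrices $A$, not the $(n-1)\times(n-1)$ restricted Hessians $D^2h_{\mathcal{E}_A}(x)$; because $A\mapsto D^2h_{\mathcal{E}_A}(x)$ is nonlinear, the former does not imply the latter, and the pointwise spanning requires its own argument.
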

	Our approach is based on a corresponding description of the space of \emph{smooth valuations} obtained by Alesker in \cite{Aleskermultiplicativestructurecontinuous2004} as a consequence of Theorem \ref{theorem:IrredicibilityTheorem}. Recall that $\mu\in\Val_k(\R^n)$ is called smooth if
	\begin{align*}
		\GL(n,\R)&\rightarrow \Val_k(\R^n)\\
		g&\mapsto \pi(g)\mu
	\end{align*}
	is a smooth map. Standard facts from representation theory imply that the subspace $\Val_k(\R^n)^{sm}$ of smooth $k$-homogeneous valuations is dense in $\Val_k(\R^n)$. This space is naturally equipped with the topology of a Fr\'echet space that is finer than the subspace topology, and the restriction of the operation of $\GL(n,\R)$ on $\Val(\R^n)^{sm}$ is continuous with respect to this topology. Moreover, Alesker's proof of Theorem \ref{theorem:IrredicibilityTheorem} applies to this representation as well, that is, the spaces $\Val_k^\pm(\R^n)^{sm}$ of smooth even/odd valuations of degree $k$ are irreducible with respect to the Fr\'echet topology. In fact, for the type of representation considered in \cite{AleskerDescriptiontranslationinvariant2001}, this is equivalent to Theorem \ref{theorem:IrredicibilityTheorem} (which can be deduced, for example, from \cite{HarishChandraRepresentationssemisimpleLie1953} Theorem 5).\\
	
	The main observation of this article is that McMullen's conjecture holds for smooth valuations in a much stronger form - every such valuation is a finite linear combination of mixed volumes. Let us remark that this was also independently observed by van Handel using a very similar idea, and we thank him for the productive discussion of this problem.\\
	More precisely, we have the following representation of smooth valuations in terms of mixed volumes.
	\begin{theorem}
		\label{theorem:finiteCombMixedVolumes}
		There exist $N:=\binom{n+1}{2}+1$ ellipsoids $\mathcal{E}_{1},\dots,\mathcal{E}_{N}$ such that for every $\mu\in\Val_k(\R^n)^{sm}$ there exist smooth convex bodies $L_\alpha^\pm\in \mathcal{K}(\R^n)$ with strictly positive Gauss curvature, $\alpha\in \mathbb{N}^N$ with $|\alpha|=n-k-1$, such that for $K\in\mathcal{K}(\R^n)$,
		\begin{align*}
			\mu(K)=\sum_{\substack{\alpha\in \mathbb{N}^N\\|\alpha|=n-k-1}}V(K[k],L^+_\alpha,\mathcal{E}_{1}[\alpha_1],\dots,\mathcal{E}_{N}[\alpha_N])-V(K[k],L^-_\alpha,\mathcal{E}_{1}[\alpha_1],\dots,\mathcal{E}_{N}[\alpha_N]).
		\end{align*}
		Moreover, for each $\alpha$ one of the bodies $L^+_\alpha$ and $L^-_\alpha$ can be chosen to be an ellipsoid, and if $\mu$ is an even valuation, then the remaining bodies can be chosen to be origin symmetric.
	\end{theorem}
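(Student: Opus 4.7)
My plan is to realize the set of valuations appearing on the right-hand side of the theorem as the image of a single continuous linear map between Fr\'echet spaces, and then to deduce surjectivity from the smooth form of the Irreducibility Theorem.

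Let $\Lambda := \{\alpha \in \mathbb{N}^N : |\alpha| = n-k-1\}$. Using that every smooth function on $S^{n-1}$ can be written as a difference of two smooth support functions of bodies with strictly positive Gauss curvature (e.g.\ by adding a sufficiently large ball) and that mixed volumes extend Minkowski-multilinearly to such differences, I would reduce the theorem to surjectivity of the continuous linear map
\[
    \Phi\colon\bigoplus_{\alpha\in\Lambda}C^\infty(S^{n-1})/\Aff\longrightarrow\Val_k(\R^n)^{sm},\quad (h_\alpha)_\alpha\longmapsto\sum_{\alpha}V(\cdot[k],h_\alpha,\mathcal{E}_1[\alpha_1],\dots,\mathcal{E}_N[\alpha_N]),
\]
where $\mathcal{E}_1,\dots,\mathcal{E}_N$ are chosen in general position in the $\binom{n+1}{2}$-dimensional space of centered ellipsoids (feasible precisely because $N$ is one more than this dimension). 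The combinatorial identity $|\Lambda|=\binom{\binom{n+1}{2}+n-k-1}{n-k-1}$ already matches the asserted bound on the number of mixed-volume summands.

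The crux of the argument is to show that the closed subspace $\overline{W}:=\overline{\mathrm{Im}(\Phi)}$ is $\GL(n,\R)$-invariant. Since $\pi(g)V(\cdot[k],h_\alpha,\mathcal{E}_1[\alpha_1],\dots)=|\det g|^{-1}V(\cdot[k],gh_\alpha,g\mathcal{E}_1[\alpha_1],\dots)$ and the list of ellipsoids is not $\GL(n,\R)$-stable, invariance of $\overline{W}$ amounts to showing that for arbitrary ellipsoids $E_1,\dots,E_{n-k-1}$ and smooth $L$, the valuation $V(\cdot[k],L,E_1,\dots,E_{n-k-1})$ still lies in $\overline{W}$. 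My plan here is a polarization/interpolation argument: mixed volumes are Minkowski-multilinear in the ellipsoid slots, and the general-position hypothesis together with $N=\binom{n+1}{2}+1$ is precisely what should allow one to interpolate any symmetric degree-$(n-k-1)$ expression in ellipsoidal parameters by monomials in the fixed $\mathcal{E}_i$'s. Once this invariance is in place, the smooth version of Theorem~\ref{theorem:IrredicibilityTheorem} (discussed immediately after its statement) applied to each parity summand $\Val_k^\pm(\R^n)^{sm}$ forces $\overline{W}=\Val_k(\R^n)^{sm}$, as $\overline{W}$ is manifestly non-zero in each parity.

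To promote density to an actual finite-sum representation ($W=\overline{W}$), a closed-range argument for $\Phi$ is needed, which I expect to follow from the open mapping theorem combined with the structural properties of $\Val_k(\R^n)^{sm}$ as an irreducible smooth Fr\'echet representation. The refinements in the theorem — that one of $L_\alpha^+,L_\alpha^-$ can always be chosen to be an ellipsoid, and in the even case the remaining bodies to be origin symmetric — should follow from standard parity considerations and the freedom in the decomposition $h=h_{L^+}-h_{L^-}$ (for instance, absorbing the large ball used in the decomposition into the ellipsoid slot, and projecting onto even components when $\mu$ is even). The main obstacle I anticipate is the $\GL(n,\R)$-invariance of $\overline{W}$: it is genuinely non-obvious since the ellipsoids are fixed in advance, and its resolution — exploiting the numerology $N=\binom{n+1}{2}+1$ and the polynomial interpolation structure on the ellipsoid parameter space — is really the heart of the theorem.
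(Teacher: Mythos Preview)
Your proposal diverges from the paper's proof and, more importantly, has two genuine gaps.

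\textbf{The interpolation step.} You identify correctly that the crux is to absorb arbitrary ellipsoids into the fixed list $\mathcal{E}_1,\dots,\mathcal{E}_N$, and you hope this follows from ``polarization/interpolation'' because $N=\binom{n+1}{2}+1$ matches the dimension of the ellipsoid parameter space plus one. But mixed volumes are \emph{not} polynomial in the defining quadratic form $A$ of an ellipsoid: the support function $h_{\mathcal{E}_A}(x)=\sqrt{\langle x,Ax\rangle}$ depends nonlinearly on $A$, and Minkowski sums of ellipsoids are not ellipsoids, so the Minkowski-multilinearity of $V$ does not let you stay within monomials in the fixed $\mathcal{E}_i$. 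A global polynomial interpolation on the cone of positive definite matrices therefore does not apply. The paper's key observation is of a different nature: one chooses the $\mathcal{E}_i$ so that the Hessians $D^2h_{\mathcal{E}_i}(x)$ span $\Sym^2(T_xS^{n-1})$ \emph{pointwise} for every $x\in S^{n-1}$ (this is a concrete linear-algebra lemma, proved by hand). Then, using the representation of the mixed area measure via mixed discriminants, one decomposes $D^2f(x)$ in this moving frame with smooth coefficients obtained from a partition of unity. The ``interpolation'' thus happens fiberwise in $\Sym^2(T_xS^{n-1})$, not in the global ellipsoid parameter space.

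\textbf{Closed range.} You invoke ``open mapping combined with the structural properties of $\Val_k(\R^n)^{sm}$ as an irreducible smooth Fr\'echet representation'' to get closed range of $\Phi$. But $\Phi$ is not $\GL(n,\R)$-equivariant for any natural action on the domain (the $\mathcal{E}_i$ are fixed), so the Casselman--Wallach theorem is not available to you. The paper avoids this entirely: it takes Alesker's surjectivity of the \emph{equivariant} map $\Theta_{n-k,0}:C^\infty((S^{n-1})^{n-k})\to\Val_k(\R^n)^{sm}$ as a black box (that is where Casselman--Wallach and irreducibility are used), then for a given $\mu=\Theta_{n-k}(F)$ writes $F=\sum_j f_1^j\otimes\cdots\otimes f_{n-k}^j$ via a quantitative Schwartz-kernel decomposition, applies the pointwise Hessian-spanning lemma term by term, and uses the explicit $C^M$-estimates to show that the resulting infinite sums of coefficient functions converge in $C^\infty$ to smooth functions $g_\alpha$. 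Surjectivity of your $\Phi$ is thus obtained \emph{constructively}, without any closed-range argument for $\Phi$ itself.

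Your treatment of the refinements (one of $L_\alpha^\pm$ a ball, even case) is fine and matches the paper.
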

	Note in particular that the ellipsoids $\mathcal{E}_1,\dots,\mathcal{E}_N$ do not depend on the valuation. Counting the number of indices, we obtain that any smooth valuation in $\Val_{k}(\R^n)$ can be represented as a linear combinations of at most $2\binom{\binom{n+1}{2}+n-k-1}{n-k-1}$ mixed volumes. As smooth valuations are dense in $\Val_k(\R^n)$, this directly implies Theorem \ref{theorem:densityFiniteComb}.\\
	
	In light of this result, one might be tempted to ask whether mixed volumes with arbitrary convex bodies span $\Val_k(\R^n)$ for any $0\le k\le n$. However, it is not difficult to see that this is not the case, apart from the trivial cases $k=0$ and $k=n$, which are spanned by the Euler characteristic and the Lebesgue measure (as shown by Hadwiger \cite{HadwigerVorlesungenuberInhalt1957}) respectively. For all other degrees we give an explicit counterexample in Section \ref{section:ContinuousCase} based on previous work by Goodey and Weil \cite{GoodeyWeilDistributionsvaluations1984}, and McMullen \cite{McMullenContinuoustranslationinvariant1980}.\\

	Let us comment on the proof of Theorem \ref{theorem:finiteCombMixedVolumes}. In \cite{Aleskermultiplicativestructurecontinuous2004}, Alesker gave a description of smooth valuations that can be interpreted as a representation of these functionals as a convergent series of linear combinations of mixed volumes. More precisely, his construction extends the mixed volume to a multilinear functional on smooth functions on a product of spheres and uses the Schwartz Kernel Theorem to obtain a linear map into the space of smooth valuations. By combining the Irreducibility Theorem \ref{theorem:IrredicibilityTheorem} (or rather the corresponding statement for the representations $\Val^\pm_k(\R^n)^{sm}$) with the Casselman-Wallach Theorem \cite{CasselmanCanonicalextensionsHarish1989}, he showed that this map is onto, which provides the desired representation. However, the mixed volumes can also be interpreted in terms of \emph{mixed area measures}, which reduces the construction to an integral over the unit sphere of the mixed discriminant of the Hessians of the functions involved. Up to a technical lemma guaranteeing the convergence, the multilinearity of the mixed discriminant thus reduces the proof of Theorem \ref{theorem:finiteCombMixedVolumes} to choosing ellipsoids such that the Hessians of their support functions span the space of symmetric matrices at each point of the sphere.\\
	
	This article is structured as follows. We first discuss a well known lemma related to the Schwartz Kernel Theorem and recall some basic facts about mixed area measures in Section \ref{section:preliminaries}. We then prove Theorem \ref{theorem:finiteCombMixedVolumes} in Section \ref{section:ProofMainResult}. In Section \ref{section:ContinuousCase} we show that mixed volumes span a proper subspace of $\Val_k(\R^n)$ for all $1\le k\le n-1$. Section \ref{section:questions} contains some comments and questions related to the results.
	
	\paragraph{Acknowledgments}
	I want to thank Andreas Bernig, Jan Kotrbat\'y, and Ramon van Handel for the discussions during the preparation of this article.
\section{Preliminaries}
\label{section:preliminaries}
\subsection{A lemma related to the Schwartz Kernel Theorem}
For a smooth manifold $S$, let $C^\infty(S)$ denote the space of smooth, $\C$-valued functions on $S$ and $C_c^\infty(S)$ the subspace of function with compact support. We will need the following well-known result.
\begin{lemma}
	\label{lemma:SchwartzKernelDecompFunction}
	Let $X_1,\dots,X_k$ be compact manifolds of dimensions $n_1,\dots,n_k$. There exist constants $C_M>0$, $M\in\mathbb{N}$, with the following property: For every smooth function $F\in C^\infty(X_1\times\dots\times X_k)$ there exist smooth functions $f_i^j\in C^\infty(X_i)$, $1\le i\le k$, $j\in\mathbb{N}$, such that
	\begin{enumerate}
		\item $F=\sum_{j=1}^{\infty}f_1^j\otimes\dots \otimes f_k^j$ converges in the $C^\infty$-topology, and
		\item for all $l=(l_1,\dots,l_k)\in\mathbb{N}^k$ the following holds (with $M_l:=|l|+\sum_{i=1}^{k}n_i+1$): 
			\begin{align*}
				\sum_{j=1}^{\infty}\prod_{i=1}^k \|f_i^j\|_{C^{l_i}(X_i)}\le C_{M_l}\|F\|_{C^{M_l}(X_1\times\dots\times X_k)}.
			\end{align*}
	\end{enumerate}
\end{lemma}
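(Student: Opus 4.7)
\medskip
\noindent\textbf{Proof plan for Lemma \ref{lemma:SchwartzKernelDecompFunction}.}
The plan is to obtain the decomposition by a spectral (Fourier series) expansion on the product manifold. Fix a Riemannian metric on each $X_i$ and let $\Delta_i$ denote the corresponding Laplace--Beltrami operator; choose an orthonormal $L^2$-basis $(\phi_i^m)_{m\in\mathbb{N}}$ of $C^\infty(X_i)$ consisting of eigenfunctions $\Delta_i\phi_i^m=\lambda_i^m\phi_i^m$, where $0\le\lambda_i^1\le\lambda_i^2\le\dots$. Then the products $\Phi_\alpha:=\phi_1^{\alpha_1}\otimes\dots\otimes\phi_k^{\alpha_k}$, $\alpha\in\mathbb{N}^k$, form an orthonormal basis of $L^2(X_1\times\dots\times X_k)$, so one can expand $F=\sum_{\alpha}c_\alpha\Phi_\alpha$ with $c_\alpha=\langle F,\Phi_\alpha\rangle_{L^2}$. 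Enumerating the multi-indices $\alpha$ by a single index $j$, I would set $f_1^j:=c_{\alpha(j)}\phi_1^{\alpha_1(j)}$ and $f_i^j:=\phi_i^{\alpha_i(j)}$ for $i\ge2$, so that $F=\sum_j f_1^j\otimes\dots\otimes f_k^j$ as formal series.

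The bulk of the work is to establish the quantitative bound in (2), from which the $C^\infty$-convergence in (1) will follow automatically by letting $l$ range over all of $\mathbb{N}^k$. I would combine three standard ingredients. First, iterated integration by parts with the operators $\Delta_i$ (applied separately in each factor) yields, for any $N_1,\dots,N_k\in\mathbb{N}$,
\begin{align*}
    (\lambda_1^{\alpha_1})^{N_1}\cdots(\lambda_k^{\alpha_k})^{N_k}\,|c_\alpha|
    =\bigl|\langle \Delta_1^{N_1}\cdots\Delta_k^{N_k} F,\Phi_\alpha\rangle\bigr|
    \le C\,\|F\|_{C^{2(N_1+\dots+N_k)}}.
\end{align*}
Second, elliptic regularity combined with the Sobolev embedding $H^{s}(X_i)\hookrightarrow C^{l_i}(X_i)$ for $s>l_i+n_i/2$ gives $\|\phi_i^m\|_{C^{l_i}(X_i)}\le C(1+\lambda_i^m)^{s_i/2}$ for some $s_i$ only slightly larger than $l_i+n_i/2$. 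Third, Weyl's asymptotic law $\lambda_i^m\sim c_i m^{2/n_i}$ ensures that $\sum_m(1+\lambda_i^m)^{-t}$ converges whenever $t>n_i/2$.

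Putting these together,
\begin{align*}
    \sum_{j=1}^\infty\prod_{i=1}^k\|f_i^j\|_{C^{l_i}(X_i)}
    \le C\sum_\alpha|c_\alpha|\prod_{i=1}^k(1+\lambda_i^{\alpha_i})^{s_i/2}.
\end{align*}
Applying the integration-by-parts bound with $N_i$ slightly larger than $(s_i+n_i)/2$, the product over $i$ of the geometric series $\sum_{m}(1+\lambda_i^m)^{s_i/2-N_i}$ converges, and the total loss of derivatives on $F$ is $2\sum_i N_i$, which I can arrange to be at most $|l|+\sum_i n_i+2=M_l$ by choosing the $N_i$ minimally. The conceptually clearest step is the integration by parts; the one step requiring care is the bookkeeping that matches the number of derivatives of $F$ needed to simultaneously ensure absolute convergence of the series in each factor to the specific constant $M_l$ claimed, and the main (mild) technical obstacle is to verify that the spectral bounds for eigenfunctions combined with Weyl's law really do sum up to the stated exponent $M_l=|l|+\sum n_i+2$ rather than a larger one.
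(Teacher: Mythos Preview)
Your spectral approach via Laplace--Beltrami eigenfunctions is a legitimate alternative and would establish a qualitatively equivalent statement. The paper proceeds differently: it reduces via a partition of unity to compactly supported functions in coordinate cubes and then expands in ordinary Fourier series. The building blocks are plane waves $e^{i\pi\langle p,x\rangle/a}$ multiplied by a fixed cutoff, so one has $\|f^i_{p}\|_{C^{l_i}}\le E_{l_i}(1+|p|)^{l_i}$ with \emph{no} dimension-dependent loss in the exponent, while integration by parts gives $|a_p|\le D_j\|F\|_{C^j}(1+\sum_i|p_i|)^{-j}$. Summing over $p\in\mathbb{Z}^{\sum n_i}$ then costs exactly $\sum n_i$ powers plus a margin for convergence, which is how the precise value $M_l=|l|+\sum n_i+2$ arises.

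In your route the eigenfunction estimate $\|\phi_i^m\|_{C^{l_i}}\le C(1+\lambda_i^m)^{s_i/2}$ with $s_i>l_i+n_i/2$ already carries an extra $n_i/2$ in the exponent per factor (Sobolev embedding; eigenfunctions on a general compact manifold are not uniformly bounded in $L^\infty$), and the Weyl summability condition costs another $n_i/2$ per factor. Tracking this through, the smallest admissible $2\sum_i N_i$ exceeds $|l|+\tfrac{3}{2}\sum_i n_i$, which is strictly larger than $M_l$ once $\sum_i n_i\ge 5$. So the obstacle you flag as ``mild'' is genuine: with the standard spectral estimates you will prove the lemma only with a larger exponent $M_l'$, not the stated $M_l$. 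For the application later in the paper this is harmless, since only $C^\infty$-convergence of the resulting series is used and any polynomial loss suffices; but as a proof of the lemma exactly as written, your bookkeeping will not close, and the local Fourier argument is the cleaner way to hit the sharp exponent.
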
		
Let us remark that most references state this result in weaker forms that are not sufficient for our application, although the version given above is contained in the proof. We will therefore include a proof for the convenience of the reader. Using a partition of unity, the result directly follows from the following lemma.
\begin{lemma}
	For  $1\le i\le k$ let $K_i\subset\R^{n_i}$ be a compact subset and $U_i$ a bounded open neighborhood of $K_i$. There exist constants $C_M>0$, $M\in\mathbb{N}$, depending on $M$ and $K_1,\dots,K_k$, $U_1,\dots,U_k$ only, such that the following holds: For every $F\in C_c^\infty(U_1\times\dots\times U_k)$ that is supported on $K_1\times\dots\times K_k$ there exist functions $f_i^j\in C_c^\infty(U_i)$, $1\le i\le k$, $j\in\mathbb{N}$, such that
	\begin{enumerate}
		\item $F=\sum_{j=1}^{\infty}f_1^j\otimes\dots \otimes f_k^j$, and
		\item for all $l=(l_1,\dots,l_k)\in\mathbb{N}^k$ the following holds (with $M_l:=|l|+\sum_{i=1}^{k}n_i+1$): 
		\begin{align*}
			\sum_{j=1}^{\infty}\prod_{i=1}^k\|f_i^j\|_{C^{l_i}(U_i)}\le C_{M_l}\|F\|_{C^{M_l}(U_1\times\dots\times U_k)}.
		\end{align*}
	\end{enumerate}
\end{lemma}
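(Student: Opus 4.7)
The plan is to reduce the lemma to a Fourier series expansion on a product of tori. First I would pick, for each $i$, a cutoff $\chi_i\in C_c^\infty(U_i)$ with $\chi_i\equiv 1$ on an open neighborhood of $K_i$, so that $F=(\chi_1\otimes\cdots\otimes\chi_k)\,F$. Next, choosing $R>0$ so large that $\supp\chi_i\subset[-R,R]^{n_i}$ for each $i$, I would extend $F$ by periodicity to a smooth function on the product of tori $T:=\prod_i(\R/2R\Z)^{n_i}$. Expanding in Fourier series on $T$ and pulling the cutoffs back inside the sum gives
\begin{align*}
F(x)=\sum_{\xi\in\Z^{n_1}\times\cdots\times\Z^{n_k}}\hat F(\xi)\prod_{i=1}^k\chi_i(x_i)\,e^{i\pi\xi_i\cdot x_i/R},
\end{align*}
so that setting $f_i^\xi(x_i):=\chi_i(x_i)e^{i\pi\xi_i\cdot x_i/R}$ for $i\geq 2$ and absorbing $\hat F(\xi)$ into $f_1^\xi$ yields, after reindexing $\xi$ by $j\in\mathbb{N}$, a decomposition of the required tensor form.

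The quantitative estimate is the remaining content. Leibniz's rule gives the elementary bound $\|f_i^\xi\|_{C^{l_i}(U_i)}\leq C_{l_i,\chi_i}(1+|\xi_i|)^{l_i}$, so the key input is a Fourier decay estimate of the form
\begin{align*}
|\hat F(\xi)|\leq C_M\,\|F\|_{C^M(T)}\,(1+|\xi|)^{-M}\qquad\text{for every }M\in\mathbb{N},
\end{align*}
which I would obtain by integrating by parts against the elliptic operator $(1-(R/\pi)^2\Delta)^{M/2}$ (with a standard Cauchy--Schwarz trick to handle odd $M$). Combining this with $\prod_i(1+|\xi_i|)^{l_i}\leq(1+|\xi|)^{|l|}$ and comparing to the convergent sum $\sum_\xi(1+|\xi|)^{|l|-M}$, which converges as soon as $M-|l|>\sum_i n_i$, shows that the choice $M_l=|l|+\sum_i n_i+2$ is sufficient. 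All constants depend only on $R$ and the $\chi_i$, hence only on the $K_i$ and $U_i$ once these are fixed once and for all.

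The main obstacle will be arranging the integration by parts so that a single $C^M$-control on $F$ yields the isotropic decay $(1+|\xi|)^{-M}$; a naive variable-by-variable integration by parts would force the total number of derivatives to grow linearly in $k$ and miss the stated bound. Once this is in place, the rest is bookkeeping: the $l=0$ version of the estimate shows that the partial sums of $F=\sum_j f_1^j\otimes\cdots\otimes f_k^j$ converge absolutely and uniformly to $F$, while the estimates for arbitrary $l$ simultaneously yield the absolute convergence of this series in every finite $C^{l_i}$-norm on $U_i$, which is all that the second conclusion of the lemma requires.
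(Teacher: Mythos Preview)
Your proposal is correct and follows essentially the same route as the paper: both expand $F$ in a Fourier series on a cube (torus) containing the supports, multiply the exponentials by fixed cutoffs $\chi_i$ to land in $C_c^\infty(U_i)$, bound the tensor factors via Leibniz by $(1+|\xi_i|)^{l_i}$, and control the Fourier coefficients by integration by parts, yielding summability precisely when $M_l=|l|+\sum_i n_i+2$. One small simplification: you do not need the elliptic operator $(1-c\Delta)^{M/2}$ or any interpolation for odd $M$; for each nonzero $\xi$ simply integrate by parts $M$ times in a single coordinate direction realizing $\max_j|\xi_j|$, which already gives $|\hat F(\xi)|\le C\|F\|_{C^M}(1+|\xi|)^{-M}$ with constants depending only on the total dimension---this is what the paper's ``integration by parts shows\dots'' is implicitly doing.
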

\begin{proof}
	We follow the argument of Gask given in \cite{GaskproofSchwartzskernel1961} Section 2. Assume that $U_i$ is contained in a centered cube with side length $2a$. Let $A_i$ denote the corresponding open cube with side lengths $2a$ in $\R^{n_i}$ for $1\le i\le k$. For $p_i\in\mathbb{Z}^{n_i}$ we consider the functions on $A_i$ given by
	\begin{align*}
		e^i_{p_i}(x_i)=\gamma_{n_i}\exp\left(i\frac{\pi}{a}\langle p_i,x_i\rangle\right), \quad x_i\in A_i,
	\end{align*}
	where $\gamma_{n_i}$ is a constant depending on $n_i$ only such that these functions are pairwise orthonormal. Decompose $F$ into a Fourier series on $A_1\times\dots\times A_k$,
	\begin{align*}
		F=\sum_{p_1,\dots,p_k}a_{p_1,\dots,p_k}e^1_{p_1}\otimes\dots\otimes e^k_{p_k}.
	\end{align*}
	Take $\psi_i\in C_c(U_i)$ with $\psi_i\equiv 1$ on a neighborhood of $K_i$. Setting $f^i_{p_i}= \psi_i e^i_{p_i}$, we obtain
	\begin{align*}
		F=\sum_{p_1,\dots,p_k}a_{p_1,\dots,p_k}f^1_{p_1}\otimes\dots\otimes f^k_{p_k}.
	\end{align*}
	The coefficients $a_{p_1,\dots,p_k}\in \C$ are given by well known integral formulas, and integration by parts shows that there exist constants $D_j>0$ for $j\in\mathbb{N}$ independent of $F$ such that 
	\begin{align*}
		|a_{p_1,\dots,p_k}|\le D_j\|F\|_{C^j(U_1\times\dots\times U_k)}\left(1+\sum_{i=1}^{k}|p_i|\right)^{-j}.
	\end{align*}
	Moreover, for every $M\in\mathbb{N}$ there exists a constant $E_M>0$ such that
	\begin{align*}
		\|f^i_{p_j}\|_{C^M(U_i)}\le E_M (1+|p_j|)^{M}
	\end{align*}
	by the Leibniz rule. Consequently, for $l=(l_1,\dots,l_k)\in\mathbb{N}^k$,
	\begin{align*}
		&\sum_{p_1,\dots,p_k}|a_{p_1,\dots,p_k}|\prod_{i=1}^k\|f^i_{p_j}\|_{C^{l_i}(U_i)}\le\left(\max_{M\le \max l_i} E_M\right)^k \sum_{p_1,\dots,p_k}|a_{p_1,\dots,p_k}|\prod_{i=1}^k(1+|p_i|)^{l_i}\\
		\le&\left(\max_{M\le \max l_i} E_M\right)^k D_{1+\sum_{i}|l_i|+n_i}\|F\|_{C^{1+\sum_{i}|l_i|+n_i}(U_1\times\dots\times U_k)}\sum_{p_1,\dots,p_k}\left(1+\sum_{i=1}^{k}|p_i|\right)^{-\left(1+\sum_{i}n_i\right)},
	\end{align*}
	where the sum on the right hand side converges. The claim follows by setting
	\begin{align*}
		C_{1+\sum_{i}|l_i|+n_i}:=\left(\max_{M\le \max l_i} E_M\right)^k D_{1+\sum_{i}|l_i|+n_i}\sum_{p_1,\dots,p_k}\left(1+\sum_{i=1}^{k}|p_i|\right)^{-\left(1+\sum_{i}n_i\right)}
	\end{align*}
	and replacing $f^1_{p_1,\dots,p_k}$ by $a_{p_1,\dots,p_{k}}f^1_{p_1,\dots,p_k}$.
\end{proof}
	\subsection{Mixed area measures of convex bodies}
	We refer to \cite{SchneiderConvexbodiesBrunn2014} for a background on convex bodies and area measures.
	The surface area measure $S_{n-1}(K)$ of a convex body $K\in\mathcal{K}(\R^n)$ can be characterized as the unique Radon measure on $S^{n-1}$ such that 
	\begin{align}
		\label{eq:defSurfaceAreaMeasure}
		\frac{d}{dt}\Big|_0\vol_n(K+tL)=\int_{S^{n-1}}h_L dS_{n-1}(K)\quad\text{for all}~L\in\mathcal{K}(\R^n).
	\end{align}
	Here, $h_K(x):=\sup_{x\in K}\langle x,y\rangle$, $x\in \R^n$, denotes the support function of $K\in\mathcal{K}(\R^n)$, which is a $1$-homogeneous convex function. In particular, $K\mapsto S_{n-1}(K)$ is a translation invariant, measure-valued valuation on $\mathcal{K}(\R^n)$. If $K$ is smooth with strictly positive Gauss curvature, then its support function restricts to a smooth function on $S^{n-1}$ and $S_{n-1}(K)$ is absolutely continuous with respect to the Hausdorff measure $\mathcal{H}^{n-1}$ on $S^{n-1}$ with
	\begin{align}
		\label{eq:surfaceAreaMeasureSmoothBody}
		dS_{n-1}(K,x)=\det\nolimits_{n-1}(D^2h_K(x))d\mathcal{H}^{n-1}(x),
	\end{align}
	where $D^2f(x)$ denotes the restriction of the Hessian of the $1$-homogeneous extension of $f\in C^2(S^{n-1})$ to the tangent space $T_xS^{n-1}$. Note that this is related to the spherical Hessian $\nabla^2f$ by $D^2f=\nabla^2f+fId$.\\
	As the surface area measure is continuous with respect to weak* convergence, one can easily deduce from this representation that for $L_1,\dots,L_{n-1}\in\mathcal{K}(\R^n)$ the map
	\begin{align*}
		(\lambda_1,\dots,\lambda_{n-1})\mapsto S_{n-1}\left(\sum_{i=1}^{n-1}\lambda_iL_i\right)
	\end{align*}
	is a homogeneous polynomial of degree $n-1$ in $\lambda_1,\dots,\lambda_{n-1}\ge 0$. The map
	\begin{align*}
		(L_1,\dots,L_{n-1})\mapsto S_{n-1}(L_1,\dots,L_{n-1}):=\frac{1}{(n-1)!}\frac{\partial^{n-1}}{\partial\lambda_1\dots\partial\lambda_{n-1}}\Big|_0S_{n-1}\left(\sum_{i=1}^{n-1}\lambda_iL_i\right)
	\end{align*}
	is called the \emph{mixed area measure} and depends continuously on $L_1,\dots,L_{n-1}$ in the weak* topology. Note that there exists a constant $c_{n,k}\in\R$ independent of $K,L_1,\dots,L_{n-k}$ such that
	\begin{align*}
		S_{n-1}(K[k],L_1,\dots,L_{n-k-1})=c_{n,k}\frac{\partial^{n-k-1}}{\partial\lambda_1\dots\partial\lambda_{n-k-1}}\Big|_0S_{n-1}\left(K+\sum_{i=1}^{n-k-1}\lambda_iL_i\right),
	\end{align*}
	so $K\mapsto S_{n-1}(K[k],L_1,\dots,L_{n-k-1})$ defines a valuation with values in the space of signed Radon measures on $S^{n-1}$. It is easy to see that this map is translation invariant and continuous with respect to the weak* topology. If $K, L_1,\dots,L_{n-k}$ are smooth convex bodies with strictly positive Gauss curvature, then \eqref{eq:surfaceAreaMeasureSmoothBody} implies that the mixed area measure is absolutely continuous with respect to the Hausdorff measure with
	\begin{align}
		\label{eq:MixedsurfaceAreaMeasureSmooth}
		&dS_{n-1}(K[k],L_1,\dots,L_{n-k-1})
		=D_{n-1}(D^2h_K[k],D^2h_{L_1},\dots,D^2h_{L_{n-k-1}})d\mathcal{H}^{n-1},
	\end{align}
	where $D_{n-1}$ denotes the mixed discriminant.\\
	Note that \eqref{eq:defSurfaceAreaMeasure} implies that the mixed volume can be expressed in terms of the mixed area measure: For $K,L_1,\dots,L_{n-k}\in\mathcal{K}(\R^n)$,
	\begin{align}
		\label{eq:relationMixedVolumeMixedSurfaceAreaMeasure}
		V(K[k],L_1,\dots,L_{n-k})=\frac{1}{n}\int_{S^{n-1}}h_{L_1}dS_{n-1}(K[k],L_2,\dots,L_{n-k}).
	\end{align}
	
\section{Proof of Theorem \ref{theorem:finiteCombMixedVolumes}}
	\label{section:ProofMainResult}
	In \cite{Aleskermultiplicativestructurecontinuous2004} Section 1, Alesker provided a construction of smooth valuations in terms of convergent sums of mixed volumes. His construction was formulated in invariant terms, however, the result can be stated in the following way.
	\begin{proposition}[Alesker \cite{Aleskermultiplicativestructurecontinuous2004}]
		\label{proposition:ThetaMixedVolumes}
		There exists a continuous linear map $\Theta_{n-k,0}:C^\infty((S^{n-1})^{n-k})\rightarrow \Val_k(\R^n)^{sm}\otimes \C$ such that
		\begin{align*}
			\Theta_{n-k,0}(h_{L_1}\otimes\dots\otimes h_{L_{n-k}})[K]=V(K[k],L_1,\dots,L_{n-k})\quad\text{for}~\quad K\in\mathcal{K}(\R^n),
		\end{align*}
		for all smooth convex bodies $L_1,\dots,L_{n-k}$ with strictly positive Gauss curvature. Moreover, $\Theta_{n-k,0}$ is onto. 
	\end{proposition}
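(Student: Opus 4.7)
The plan is to construct $\Theta_{n-k,0}$ by first extending the mixed volume to a multilinear functional on smooth functions on $S^{n-1}$ via the mixed area measure representation, and then to promote this multilinear object to a linear map on $C^\infty((S^{n-1})^{n-k})$ using Lemma~\ref{lemma:SchwartzKernelDecompFunction}. Surjectivity will be extracted from the Irreducibility Theorem combined with the Casselman--Wallach theorem.

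First I would combine \eqref{eq:relationMixedVolumeMixedSurfaceAreaMeasure} and \eqref{eq:MixedsurfaceAreaMeasureSmooth} to get, for smooth bodies $K,L_1,\dots,L_{n-k}$ with strictly positive Gauss curvature,
\begin{align*}
V(K[k],L_1,\dots,L_{n-k}) = \int_{S^{n-1}} h_{L_1}\, D_{n-1}(D^2 h_K[k], D^2 h_{L_2},\dots,D^2 h_{L_{n-k}})\, d\mathcal{H}^{n-1}.
\end{align*}
Since $D^2$ is linear and $D_{n-1}$ is multilinear, the right-hand side depends multilinearly on $(h_{L_1},\dots,h_{L_{n-k}})$. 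I would therefore set
\begin{align*}
\widetilde{T}(f_1,\dots,f_{n-k})[K] := \int_{S^{n-1}} f_1\, D_{n-1}(D^2 h_K[k], D^2 f_2,\dots,D^2 f_{n-k})\, d\mathcal{H}^{n-1}
\end{align*}
for arbitrary $f_1,\dots,f_{n-k}\in C^\infty(S^{n-1})$ and smooth $K$, and then extend in $K$ to all of $\mathcal{K}(\R^n)$ using weak$^\ast$-continuity of the mixed area measure. A routine chain-rule computation on the Hessian together with multilinearity of $D_{n-1}$ yields estimates of the form
\begin{align*}
\|\pi(g_1)\cdots\pi(g_r)\widetilde{T}(f_1,\dots,f_{n-k})\| \le C_{r,g_1,\dots,g_r}\,\prod_{i=1}^{n-k} \|f_i\|_{C^{M(r)}(S^{n-1})},
\end{align*}
which is what is needed to control every Fr\'echet seminorm on $\Val_k(\R^n)^{sm}$.

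Given $F\in C^\infty((S^{n-1})^{n-k})$, I would apply Lemma~\ref{lemma:SchwartzKernelDecompFunction} to decompose $F=\sum_j f_1^j\otimes\dots\otimes f_{n-k}^j$ and set
\begin{align*}
\Theta_{n-k,0}(F) := \sum_{j=1}^\infty \widetilde{T}(f_1^j,\dots,f_{n-k}^j).
\end{align*}
The quantitative summability in part (ii) of the lemma, paired with the estimate above, ensures the series converges in every Fr\'echet seminorm on $\Val_k(\R^n)^{sm}$; multilinearity of $\widetilde{T}$ shows the limit is independent of the chosen decomposition. By construction the resulting continuous linear map into $\Val_k(\R^n)^{sm}\otimes\C$ reproduces the mixed volume on tensor products of smooth support functions.

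The main obstacle is surjectivity. The map $\Theta_{n-k,0}$ is $\GL(n,\R)$-equivariant for the natural action on the source (by pullback along the diagonal action on $(S^{n-1})^{n-k}$, after $1$-homogeneous extension) and the representation $\pi$ on the target, so its image is a non-zero $\GL(n,\R)$-invariant subspace of $\Val_k(\R^n)^{sm}\otimes\C$. Splitting into even and odd parts and applying the smooth version of Theorem~\ref{theorem:IrredicibilityTheorem} gives that the image is dense in each of $\Val_k^\pm(\R^n)^{sm}\otimes\C$. To upgrade density to equality I would invoke the Casselman--Wallach theorem: a continuous, equivariant linear map between smooth Fr\'echet representations of moderate growth with admissible underlying Harish-Chandra modules has closed image. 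Together with irreducibility this forces the image to be all of $\Val_k^\pm(\R^n)^{sm}\otimes\C$, completing the proof.
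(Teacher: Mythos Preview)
The paper does not give its own proof of this proposition; it is quoted from Alesker \cite{Aleskermultiplicativestructurecontinuous2004}, and the surrounding text merely sketches Alesker's argument and flags a subtlety. Your proposal matches that sketch essentially verbatim: you extend the mixed volume to a multilinear map on $C^\infty(S^{n-1})^{n-k}$ via the mixed area measure formula \eqref{eq:MixedsurfaceAreaMeasureSmooth}, promote it to a continuous linear map on $C^\infty((S^{n-1})^{n-k})$ using Lemma~\ref{lemma:SchwartzKernelDecompFunction} (in place of the Schwartz Kernel Theorem, which is what Alesker invokes, but the two are equivalent for this purpose), and then combine $\GL(n,\R)$-equivariance, the smooth version of the Irreducibility Theorem, and the Casselman--Wallach theorem to obtain surjectivity. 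You even correctly anticipate the point the paper singles out in its remark after the proposition, namely that one must use irreducibility of $\Val_k^\pm(\R^n)^{sm}$ in its Fr\'echet topology rather than of $\Val_k^\pm(\R^n)$.
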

	Let us remark that the proof given in \cite{AleskerDescriptiontranslationinvariant2001} contains a minor inaccuracy: The fact that this map is onto is deduced from the Casselman-Wallach Theorem \cite{CasselmanCanonicalextensionsHarish1989}, which shows that the image of this map is closed, and the Irreducibility Theorem \ref{theorem:IrredicibilityTheorem}, which implies that the image dense. However, due to the different topologies, it is not sufficient to apply the Irreducibility Theorem for the spaces $\Val_k^\pm(\R^n)$; instead one has to use that the spaces $\Val_k^\pm(\R^n)^{sm}$ of smooth valuations are irreducible with respect to their natural Fr\'echet topology. As mentioned in the introduction, these two different notions are essentially equivalent for this type of representation.\\

	Combining this result with Lemma \ref{lemma:SchwartzKernelDecompFunction}, it is not difficult to see that any smooth valuation admits a representation as a convergent sum of mixed volumes. We will require the following reinterpretation of this construction.
	\begin{corollary}
		\label{corollary:smoothValSeriesMixedVolumes}
		There exists a continuous linear map $\Theta_{n-k}:C^\infty((S^{n-1})^{n-k})\rightarrow \Val_k(\R^n)^{sm}\otimes\C$ such that
		\begin{align*}
			\Theta_{n-k}(h_{L_1}\otimes\dots\otimes h_{L_{n-k}})[K]=\int_{S^{n-1}}h_{L_1}dS_{n-1}(K[k],L_2,\dots,L_{n-k}), \quad K\in\mathcal{K}(\R^n),
		\end{align*}
		for all smooth convex bodies $L_1,\dots,L_{n-k}$ with strictly positive Gauss curvature. Moreover, $\Theta_{n-k}$ is onto.
	\end{corollary}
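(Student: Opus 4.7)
The plan is to take $\Theta_{n-k}$ to be the very same map $\Theta_{n-k,0}$ supplied by Proposition \ref{proposition:ThetaMixedVolumes}: the corollary is not a new existence statement but a reformulation of the action of $\Theta_{n-k,0}$ on elementary tensors of support functions via the duality between mixed volumes and mixed area measures.

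More concretely, I would set $\Theta_{n-k}:=\Theta_{n-k,0}$. The continuity, linearity, and surjectivity required by the corollary are then inherited directly from Proposition \ref{proposition:ThetaMixedVolumes}. To verify the integral formula on elementary tensors, take smooth convex bodies $L_1,\dots,L_{n-k}$ with strictly positive Gauss curvature. Proposition \ref{proposition:ThetaMixedVolumes} gives
\begin{equation*}
\Theta_{n-k,0}(h_{L_1}\otimes\dots\otimes h_{L_{n-k}})[K]=V(K[k],L_1,\dots,L_{n-k}),
\end{equation*}
and the identity \eqref{eq:relationMixedVolumeMixedSurfaceAreaMeasure} rewrites the right-hand side as
\begin{equation*}
V(K[k],L_1,\dots,L_{n-k})=\int_{S^{n-1}}h_{L_1}\,dS_{n-1}(K[k],L_2,\dots,L_{n-k}),
\end{equation*}
which is precisely the expression demanded in the corollary.

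Since the statement is a cosmetic reformulation of Proposition \ref{proposition:ThetaMixedVolumes}, no genuine obstacle arises. The purpose of the restatement is that it singles out the first tensor factor, thereby matching the multilinear structure of the integral of a mixed discriminant that appears in \eqref{eq:MixedsurfaceAreaMeasureSmooth}. This asymmetry is what will allow, in the proof of Theorem \ref{theorem:finiteCombMixedVolumes}, a reduction via Lemma \ref{lemma:SchwartzKernelDecompFunction} to decomposing a smooth function on a single factor $S^{n-1}$ while keeping a fixed family of ellipsoids on the remaining factors.
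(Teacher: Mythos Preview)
Your proposal is correct and matches the paper's own proof essentially verbatim: the paper simply sets $\Theta_{n-k}:=\Theta_{n-k,0}$ and invokes \eqref{eq:relationMixedVolumeMixedSurfaceAreaMeasure} together with Proposition~\ref{proposition:ThetaMixedVolumes}. Your additional remarks on the purpose of the reformulation are accurate but go beyond what the paper states in its proof.
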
	
	\begin{proof}
		Due to the relation of mixed volumes and mixed area measures in \eqref{eq:relationMixedVolumeMixedSurfaceAreaMeasure} and Proposition \ref{proposition:ThetaMixedVolumes}, we can take $\Theta_{n-k}:=n\Theta_{n-k,0}$.
	\end{proof}
	Next, we are going to construct the relevant ellipsoids. For $A\in \Sym^2(\R^n)$ positive definite, let $\mathcal{E}_A\in\mathcal{K}(\R^n)$ denote the ellipsoid with support function $h_{\mathcal{E}_{A}}(x)=\sqrt{\langle x,Ax\rangle}$ for $x\in\R^n$. Consider the matrix $E_{ij}\in \Sym^2(\R^n)$ corresponding to the quadratic form $x\mapsto x_ix_j-\delta_{ij}x_ix_j$ and set $A_{ij}(t):=tId+E_{ij}$ for $t\ge 1$. 
	\begin{lemma}
		\label{lemma:ChoiceEllipsoids}
		There exists $t\ge 1$ such that $D^2h_{\mathcal{E}_{A_{ij}(t)}}(x)$, $1\le i\le j\le n$, and $D^2h_{\mathcal{E}_{Id}}(x)$ span $\Sym^2 (T_xS^{n-1})$ for all $x\in S^{n-1}$.
	\end{lemma}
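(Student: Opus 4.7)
The plan is to analyze the matrices $D^2 h_{\mathcal{E}_{A_{ij}(t)}}(x)$ asymptotically as $t\to\infty$: after subtracting a multiple of $D^2 h_{\mathcal{E}_{Id}}(x)=\mathrm{Id}_{T_xS^{n-1}}$ and rescaling, they converge to an explicit limit that can be analyzed by linear algebra. Since spanning is an open condition and $S^{n-1}$ is compact, it will then suffice to verify the spanning property of the limit matrices pointwise at every $x\in S^{n-1}$.

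For the expansion, write $h_{\mathcal{E}_{A_{ij}(t)}}(x)=\sqrt{t|x|^2+x_ix_j}=\sqrt{t}\,|x|\,\sqrt{1+x_ix_j/(t|x|^2)}$ and Taylor expand $\sqrt{1+u}$; uniformly on a neighborhood of $S^{n-1}$ and in the $C^\infty$ topology,
\begin{align*}
h_{\mathcal{E}_{A_{ij}(t)}}(x)=\sqrt{t}\,|x|+\frac{1}{2\sqrt{t}}\,\tilde f_{ij}(x)+O(t^{-3/2}),\qquad \tilde f_{ij}(x):=\frac{x_ix_j}{|x|},
\end{align*}
where $\tilde f_{ij}$ is the $1$-homogeneous extension of $x_ix_j|_{S^{n-1}}$. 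Taking Hessians and restricting to $T_xS^{n-1}$, and using $h_{\mathcal{E}_{Id}}(x)=|x|$, one obtains
\begin{align*}
D^2 h_{\mathcal{E}_{A_{ij}(t)}}(x)=\sqrt{t}\,\mathrm{Id}_{T_xS^{n-1}}+\frac{1}{2\sqrt{t}}\,D^2\tilde f_{ij}(x)+O(t^{-3/2}),
\end{align*}
uniformly in $x\in S^{n-1}$. Thus the span of $\{D^2 h_{\mathcal{E}_{A_{ij}(t)}}(x)\}_{i\le j}\cup\{D^2 h_{\mathcal{E}_{Id}}(x)\}$ coincides with the span of $\{D^2\tilde f_{ij}(x)+O(t^{-1})\}_{i\le j}\cup\{\mathrm{Id}_{T_xS^{n-1}}\}$.

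The core pointwise input is that $\{D^2\tilde f_{ij}(x):1\le i\le j\le n\}\cup\{\mathrm{Id}_{T_xS^{n-1}}\}$ spans $\Sym^2(T_xS^{n-1})$ at every $x\in S^{n-1}$. A direct computation of the Euclidean Hessian of $\tilde f_{ij}$ evaluated on tangent vectors $v,w$ (so $\langle x,v\rangle=\langle x,w\rangle=0$) gives
\begin{align*}
D^2\tilde f_{ij}(x)(v,w)=v_iw_j+v_jw_i-x_ix_j\langle v,w\rangle.
\end{align*}
Modulo $\mathrm{Id}_{T_xS^{n-1}}$, this equals twice the symmetrized outer product $(P_xe_i)\odot(P_xe_j)$, where $P_x=\mathrm{Id}-x\otimes x$ is the orthogonal projection onto $T_xS^{n-1}$. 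Since $P_xe_1,\dots,P_xe_n$ span the $(n-1)$-dimensional space $T_xS^{n-1}$, any $n-1$ of them forming a basis yield $\binom{n}{2}$ symmetrized products that already form a basis of $\Sym^2(T_xS^{n-1})$; hence the full family $\{(P_xe_i)\odot(P_xe_j)\}_{i\le j}$ spans, and together with $\mathrm{Id}_{T_xS^{n-1}}$ the pointwise spanning property follows.

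Finally, spanning is an open condition on tuples of symmetric forms, and by continuity and compactness of $S^{n-1}$ the pointwise spanning is uniform: there is a continuous choice of decomposition with bounded coefficients representing every element of $\Sym^2(T_xS^{n-1})$. Since the $O(t^{-1})$ perturbation in the expansion is likewise uniform in $x$, for $t$ sufficiently large the perturbed matrices $\{D^2 h_{\mathcal{E}_{A_{ij}(t)}}(x):1\le i\le j\le n\}\cup\{D^2 h_{\mathcal{E}_{Id}}(x)\}$ still span $\Sym^2(T_xS^{n-1})$ at every $x$, which proves the lemma. The one step that deserves care is the explicit formula for $D^2\tilde f_{ij}(x)$ on tangent vectors, since it is what links the perturbative expansion to the linear-algebra reduction; once this identity is in hand, the remaining steps are routine.
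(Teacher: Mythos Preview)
Your argument is correct and takes a genuinely different route from the paper's. The paper computes the Hessian of $h_{\mathcal{E}_A}$ in closed form, forms the specific linear combination
\[
B_{ij}(x)=\langle x,A_{ij}(t)x\rangle^{3/2}\,\mathrm{Hess}(h_{\mathcal{E}_{A_{ij}(t)}})(x)-t\langle x,A_{ij}(t)x\rangle\,\mathrm{Hess}(h_{\mathcal{E}_{Id}})(x),
\]
and then argues by contradiction using the trace pairing on $\Sym^2(\R^n)$: if the restrictions of the $B_{ij}(x)$ to $x^\perp$ failed to span, some nonzero $C\in\Sym^2(x^\perp)$ would be orthogonal to all of them, and a direct estimate rules this out for the explicit value $t=1+1/c$ (with $c$ a norm-comparison constant). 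Your approach instead Taylor-expands $h_{\mathcal{E}_{A_{ij}(t)}}$ in $t^{-1}$, isolates $D^2\tilde f_{ij}$ as the first nontrivial term, reduces to the clean linear-algebra fact that $\{(P_xe_i)\odot(P_xe_j)\}_{i\le j}$ spans $\Sym^2(T_xS^{n-1})$, and then invokes openness of spanning plus compactness of $S^{n-1}$. The trade-offs are clear: the paper's proof is fully quantitative and self-contained (an explicit $t$, no limiting argument), while yours is more conceptual and explains \emph{why} the ellipsoids work---the Hessians are perturbations of $\sqrt{t}\,\mathrm{Id}$ whose first-order corrections recover, up to a multiple of the identity, the full family of symmetrized rank-one forms on the tangent space. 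One small point worth making explicit in your write-up is the uniformity step: the map sending coefficients to the corresponding linear combination of the limit sections is a surjective bundle morphism over the compact base $S^{n-1}$, so its smallest fiberwise singular value is bounded below, and hence surjectivity persists under uniform $O(t^{-1})$ perturbations. With that sentence added, the argument is complete.
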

	\begin{proof}
	Recall that $(C_1,C_2)\mapsto \tr(C_1C_2)$ defines a scalar product on $\Sym^2(\R^n)$. As $E_{ij}$, $1\le i\le j\le n$, is a basis of $\Sym^2(\R^n)$, we can thus define a norm on $\Sym^2(\R^n)$ by $\|C\|_*:=\max_{i,j}|\tr(C E_{ij})|$. Let us denote the operator norm of $C\in \Sym^2(\R^n)$ by $\|C\|$. We can then find $c>0$ such that $c\|C\|\le \|C\|_*$ for all $C\in \Sym^2(\R^n)$. We claim that $t:=1+\frac{2}{c}$ has the desired properties.\\
		
	First, a simple calculation shows that for any positive definite matrix $A\in \Sym^2(\R^n)$, the Hessian of $h_{\mathcal{E}_A}(x)=\sqrt{\langle x,Ax\rangle}$ is given by
	\begin{align*}
		\mathrm{Hess}(h_{\mathcal{E}_A})(x)=\frac{\langle x,Ax\rangle A-(Ax)(Ax)^T}{\sqrt{\langle x,Ax\rangle}^3}.
	\end{align*}
	We will show that the restrictions of the matrices 
	\begin{align*}
		B_{ij}(x):=&	\sqrt{\langle x,A_{ij}(t)x\rangle}^3\mathrm{Hess}(h_{\mathcal{E}_{A_{ij}(t)}})(x)-t\langle x,A_{ij}(t)x\rangle \mathrm{Hess}(h_{\mathcal{E}_{Id}})(x)\\
		=&\langle x,A_{ij}(t)x\rangle E_{ij}-(A_{ij}(t)x)\cdot (A_{ij}(t)x)^T+t\langle x,A_{ij}(t)x\rangle x\cdot x^T\\
		=&\langle x,A_{ij}(t)x\rangle E_{ij}-(E_{ij}x)\cdot (E_{ij}x)^T\\
		&-tx\cdot (E_{ij}x)^T-t(E_{ij}x)\cdot x^T-t^2 x\cdot x^T+t\langle x,A_{ij}(t)x\rangle x\cdot x^T.
	\end{align*}
	to $x^\perp=T_xS^{n-1}$ span $\Sym^2 (x^\perp)$ for all $x\in S^{n-1}$, which implies the claim. Note that we can identify $\Sym^2(x^\perp)$ with the space of matrices $C\in \Sym^2(\R^n)$ that satisfy $Cx=0$.\\
	
	Fix $x\in S^{n-1}$ and assume that the restrictions of $B_{ij}(x)$, $1\le i\le j\le n$, to $x^\perp$ span a proper subspace of $\Sym^2(x^\perp)$. Then there exists a non-trivial $C\in \Sym^2(\R^n)$ with $Cx=0$ such that $\tr(CB_{ij}(x))=0$ for all $1\le i\le j\le n$. We may assume that $\|C\|_*= \tr(CE_{i_0j_0})>0$ for some $1\le i_0\le j_0\le n$. However, as $t=1+\frac{2}{c}$ and $\|E_{ij}\|=1$, this leads to the contradiction
	\begin{align*}
		0=&\tr(CB_{i_0j_0}(x))=\tr\left(C(E_{i_0j_0}\langle x,A_{i_0j_0}(t)x\rangle-(E_{i_0j_0}x)\cdot (E_{i_0j_0}x)^T)\right)\\
		=&\tr(CE_{i_0j_0})\langle x,A_{i_0j_0}(t)x\rangle-\langle E_{i_0j_0}x,CE_{i_0j_0}x\rangle\\
		\ge&\tr(CE_{i_0j_0}) (t-\|E_{i_0j_0}\|)|x|^2-\|C\|\cdot \|E_{i_0j_0}\|^2|x|^2\\
		=&\|C\|_*\left(t-1\right)-\|C\|\ge c\|C\|\left(t-1\right)-\|C\|=\left(c(t-1)-1\right)\|C\|=\|C\|>0.
	\end{align*}
	Thus the restrictions of these matrices have to span $\Sym^2(x^\perp)$. The claim follows.
	\end{proof}

	We will deduce Theorem \ref{theorem:finiteCombMixedVolumes} from the following result concerning finite linear combinations of valuations obtained from mixed area measures.
	\begin{theorem}
		\label{theorem:finiteCombMixedSurfaceArea}
		There exist $N:=\binom{n+1}{2}+1$ ellipsoids $\mathcal{E}_{1},\dots,\mathcal{E}_{N}$ such that for every $\mu\in\Val_k(\R^n)^{sm}\otimes \C$ there exist functions $g_{\alpha}\in C^\infty(S^{n-1})$, $\alpha\in \mathbb{N}^N$ with $|\alpha|=n-k-1$, such that
		\begin{align*}
			\mu(K)=\sum_{\substack{\alpha\in \mathbb{N}^N\\|\alpha|=n-k-1}}\int_{S^{n-1}}g_{\alpha}dS_{n-1}(K[k],\mathcal{E}_{1}[\alpha_1],\dots,\mathcal{E}_{N}[\alpha_N]).
		\end{align*}
		Moreover, if $\mu$ is even or odd, then $g_\alpha$ can be chosen to be even or odd respectively.
	\end{theorem}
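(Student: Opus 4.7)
The plan is to combine Corollary \ref{corollary:smoothValSeriesMixedVolumes} with Lemma \ref{lemma:SchwartzKernelDecompFunction} to obtain a convergent series representation of $\mu$ in terms of integrals of mixed discriminants of Hessians, and then to collect this series into the required finite sum by expanding the Hessians in the spanning family from Lemma \ref{lemma:ChoiceEllipsoids} and using multilinearity of the mixed discriminant.

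Since $\Theta_{n-k}$ is onto, write $\mu=\Theta_{n-k}(F)$ for some $F\in C^\infty((S^{n-1})^{n-k})$, and apply Lemma \ref{lemma:SchwartzKernelDecompFunction} to decompose $F=\sum_{j=1}^\infty f_1^j\otimes\cdots\otimes f_{n-k}^j$ with the quantitative summability bounds on $\sum_j\prod_i\|f_i^j\|_{C^{l_i}}$. Every smooth function on $S^{n-1}$ is a $C^\infty$-limit of support functions of smooth convex bodies with strictly positive Gauss curvature (add a large multiple of the Euclidean norm), so continuity and multilinearity of $\Theta_{n-k}$ together with \eqref{eq:MixedsurfaceAreaMeasureSmooth} and \eqref{eq:relationMixedVolumeMixedSurfaceAreaMeasure} extend the identity from Corollary \ref{corollary:smoothValSeriesMixedVolumes} to
\[\Theta_{n-k}(f_1\otimes\cdots\otimes f_{n-k})[K]=\int_{S^{n-1}}f_1\,D_{n-1}(D^2 h_K[k],D^2 f_2,\ldots,D^2 f_{n-k})\,d\mathcal{H}^{n-1}\]
for arbitrary $f_i\in C^\infty(S^{n-1})$, whenever $K$ is smooth with strictly positive Gauss curvature.

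By Lemma \ref{lemma:ChoiceEllipsoids} the Hessians $D^2 h_{\mathcal{E}_l}(x)$, $l=1,\ldots,N$, span $\Sym^2(T_x S^{n-1})$ at every $x\in S^{n-1}$, and a standard partition of unity argument produces a continuous linear right inverse $P=(P_1,\ldots,P_N):C^\infty(S^{n-1})\to C^\infty(S^{n-1})^N$ satisfying $D^2 f=\sum_{l=1}^N P_l(f)\,D^2 h_{\mathcal{E}_l}$. Substituting $D^2 f_i^j=\sum_l P_l(f_i^j)\,D^2 h_{\mathcal{E}_l}$ into the formula above and using multilinearity of $D_{n-1}$ to expand and regroup by the multi-index $\alpha\in\mathbb{N}^N$ recording multiplicities, one obtains
\[\Theta_{n-k}(f_1^j\otimes\cdots\otimes f_{n-k}^j)[K]=\sum_{|\alpha|=n-k-1}\int_{S^{n-1}}f_1^j\,Q_{\alpha,j}\,D_{n-1}(D^2 h_K[k],D^2 h_{\mathcal{E}_1}[\alpha_1],\ldots,D^2 h_{\mathcal{E}_N}[\alpha_N])\,d\mathcal{H}^{n-1},\]
where $Q_{\alpha,j}\in C^\infty(S^{n-1})$ is a fixed symmetric polynomial in the smooth functions $P_l(f_i^j)$.

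Setting $g_\alpha:=\sum_j f_1^j Q_{\alpha,j}$, the main obstacle is convergence of this series in $C^\infty(S^{n-1})$ together with the corresponding interchange of summation and integration. This is where the strong quantitative form of Lemma \ref{lemma:SchwartzKernelDecompFunction} becomes essential: continuity of $P$ in all $C^\infty$-seminorms yields $\|Q_{\alpha,j}\|_{C^m(S^{n-1})}\lesssim\prod_{i=2}^{n-k}\|f_i^j\|_{C^{m+2}(S^{n-1})}$ via the Leibniz rule, and the summability of $\sum_j\prod_i\|f_i^j\|_{C^{l_i}(S^{n-1})}$ guaranteed by the lemma then forces $\sum_j\|f_1^j Q_{\alpha,j}\|_{C^m(S^{n-1})}$ to converge for every $m$. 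Combined with \eqref{eq:MixedsurfaceAreaMeasureSmooth}, this yields the desired identity for smooth $K$ with strictly positive Gauss curvature; both sides are continuous valuations in $K$ and such bodies are dense in $\mathcal{K}(\R^n)$, so the identity extends to all $K$. The parity assertion follows by first averaging $F$ under the involution corresponding to $\mu\circ(-\mathrm{id})=\pm\mu$: since the ellipsoids $\mathcal{E}_l$ are origin-symmetric, the functions $D^2 h_{\mathcal{E}_l}$ are even in $x$, hence $P$ preserves the parity of each $f_i^j$, and this parity is inherited by the $Q_{\alpha,j}$ and by $g_\alpha$.
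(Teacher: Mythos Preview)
Your argument is essentially the same as the paper's: surjectivity of $\Theta_{n-k}$, the tensor decomposition from Lemma~\ref{lemma:SchwartzKernelDecompFunction}, expansion of each $D^2 f_i^j$ in the spanning family of Hessians from Lemma~\ref{lemma:ChoiceEllipsoids} via a partition of unity, the $C^{m+2}$-to-$C^m$ estimate from the Leibniz rule, and the use of the quantitative summability in Lemma~\ref{lemma:SchwartzKernelDecompFunction} to pass from the series to smooth coefficients $g_\alpha$. Your packaging of the partition-of-unity step as a single continuous right inverse $P$ is exactly what the paper does locally with the sections $\Psi^i_s$.

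The one point where your write-up differs and becomes shaky is the parity claim. Averaging $F$ under the diagonal antipodal involution does \emph{not} force the individual factors $f_i^j$ produced by Lemma~\ref{lemma:SchwartzKernelDecompFunction} to have any parity, so the phrase ``$P$ preserves the parity of each $f_i^j$'' is vacuous as stated; and even with an antipodally symmetric choice of partition of unity there is no reason the resulting $g_\alpha$ inherit the parity of $F$. The paper sidesteps this entirely: once the representation $\mu(K)=\sum_\alpha\int g_\alpha\,dS_{n-1}(K[k],\mathcal{E}_1[\alpha_1],\dots,\mathcal{E}_N[\alpha_N])$ is established, linearity in $g_\alpha$ together with the fact that the ellipsoids are origin-symmetric (so $dS_{n-1}(-K[k],\mathcal{E}[\alpha])$ is the pushforward of $dS_{n-1}(K[k],\mathcal{E}[\alpha])$ under the antipodal map) lets one simply replace each $g_\alpha$ by its even or odd part after the fact.
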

	\begin{proof}
		Note that this expression is linear in $g_\alpha$, so we can replace $g_\alpha$ by its even or odd part for even or odd valuations respectively, which shows the last claim.\\
		Let $\mu\in\Val_k(\R^n)^{sm}\otimes\C$. By Corollary \ref{corollary:smoothValSeriesMixedVolumes}, there exists $F\in  C^\infty((S^{n-1})^{n-k})$ such that $\mu=\Theta_{n-k}(F)$.
		By Lemma \ref{lemma:SchwartzKernelDecompFunction}, we find functions $f_1^j,\dots,f^j_{n-k}\in C^\infty(S^{n-1})$ such that
		\begin{itemize}
			\item $F=\sum_{j=1}^{\infty}f_1^j\otimes\dots \otimes f_{n-k}^j$, and
			\item for all $l=(l_1,\dots,l_{n-k})\in\mathbb{N}^{n-k}$: 
			\begin{align}
				\label{eq:finiteSeriesCM}
				\sum_{i=1}^{\infty} \prod_{i=1}^{n-k}\|f_i^j\|_{C^{l_i}(S^{n-1})}\le C_{M_l}\|F\|_{C^{M_l}((S^{n-1})^k)}
			\end{align}
			for a constant $C_{M_l}$ independent of $F$, where $M_l=|l|+(n-k)(n-1)+1$.
		\end{itemize}
		In other words, the series converges in the $C^\infty$-topology. As $\Theta_{n-k}$ is continuous, this implies
		\begin{align*}
			\Theta_{n-k}(F)[K]=&\sum_{j=1}^\infty\Theta_{n-k}(f_1^j\otimes\dots\otimes f^j_{n-k})[K].
		\end{align*}
		If $K\in\mathcal{K}(\R^n)$ is smooth with strictly positive Gauss curvature, then the multilinearity of the mixed discriminant and the defining property of $\Theta_{n-k}$ from Corollary \ref{corollary:smoothValSeriesMixedVolumes} imply
		\begin{align*}
			\Theta_{n-k}(f_1^j\otimes\dots\otimes f^j_{n-k})[K]=\int_{S^{n-1}}f^j_1 D_{n-1}(D^2h_K[k], D^2f^j_2,\dots,D^2f^j_{n-k})d\mathcal{H}^{n-1}.
		\end{align*}
		By Lemma \ref{lemma:ChoiceEllipsoids}, there exist ellipsoids $\mathcal{E}_1,\dots,\mathcal{E}_N$ such that $D^2h_{\mathcal{E}_1},\dots, D^2h_{\mathcal{E}_N}$ span $\Sym^2T_xS^{n-1}$. In particular, for any point $x_0\in S^{n-1}$ we can find a neighborhood $U$ such that a selection of these functions generate a basis of $\Sym^2(T_xS^{n-1})$ for all $x\in U$. As $S^{n-1}$ is compact, we find a finite cover $U_i$, $1\le i\le m$, of subsets with this property. Denote the ellipsoids corresponding to the chosen basis on $U_i$ by $\mathcal{E}^i_{1},\dots, \mathcal{E}^i_{\binom{n}{2}}$, and let $\psi_i\in C^\infty(U_i)$ be a partition of unity subordinate to the open cover $(U_i)_{1\le i\le m}$. As $\psi_{i}$ is compactly supported in $U_i$, we find smooth sections $\Psi^i_{s}\in C^\infty_c(U_i,\Sym^2(TS^{n-1})^*)$ such that
		\begin{align*}
			A=\sum_{s=1}^{\binom{n}{2}}\Psi^i_s|_x(A)D^2h_{\mathcal{E}^i_s}(x)\quad\text{for}~A\in\Sym^2T_xS^{n-1}
		\end{align*}
		for all $x$ belonging to a neighborhood of $\supp\psi_{i}$. 
		Using the multilinearity of the mixed discriminant, we thus obtain for $K$ smooth with strictly positive Gauss curvature
		\begin{align}
			\notag
			&\Theta_{n-k}(F)[K]=\sum_{j=1}^\infty\sum_{i_2,\dots,i_{n-k}}^m\int_{S^{n-1}}f^j_1 D_{n-1}(D^2h_K[k],\psi_{i_2} D^2f^j_2,\dots,\psi_{i_{n-k}}D^2f^j_{n-k})d\mathcal{H}^{n-1}\\
			\notag
			=&\sum_{j=1}^\infty\sum_{i_2,\dots,i_{n-k}}^m\sum_{s_2,\dots,s_{n-k}}^{\binom{n}{2}}\int_{S^{n-1}}f^j_1\prod_{l=2}^{n-k}\psi_{i_l} \Psi^{i_l}_{s_l}(D^2f^j_l) D_{n-1}(D^2h_K[k],D^2h_{\mathcal{E}^{i_2}_{s_2}},\dots,D^2h_{\mathcal{E}^{i_{n-k}}_{s_{n-k}}})d\mathcal{H}^{n-1}\\
			\label{equation:RepresentationMuMixedSurfaceArea}
			=&\sum_{j=1}^\infty\sum_{i_2,\dots,i_{n-k}}^m\sum_{s_2,\dots,s_{n-k}}^{\binom{n}{2}}\int_{S^{n-1}}f^j_1\prod_{l=2}^{n-k}\psi_{i_l} \Psi^{i_l}_{s_l}(D^2f^j_l) dS_{n-1}(K[k],\mathcal{E}^{i_2}_{s_2},\dots,\mathcal{E}^{i_{n-k}}_{s_{n-k}}).
		\end{align}
		Using the Leibniz rule, we see that for every $M\in\mathbb{N}$ there exists $D_M>0$ such that
		\begin{align*}
			\left\|\psi_{i_l} \Psi^{i_l}_{s_l}(D^2f^j_l)\right\|_{C^M(S^{n-1})}\le D_M \|f^j_l\|_{C^{M+2}(S^{n-1})}.
		\end{align*}
		Using the Leibniz rule again, we thus obtain
		\begin{align*}
			\sum_{j=1}^{\infty}\left\|f^j_1\prod_{l=2}^{n-k}\psi_{i_l} \Psi^{i_l}_{s_l}(D^2f^j_l)\right\|_{C^M(S^{n-1})}\le& c_{M,n,k}\sum_{j=1}^{\infty}\|f^j_1\|_{C^M(S^{n-1})}\prod_{l=2}^{n-k}\left\|\psi_{i_l} \Psi^{i_l}_{s_l}(D^2f^j_l)\right\|_{C^M(S^{n-1})}\\
			\le&c_{M,n,k}D_M^{n-k}\sum_{j=1}^\infty\|f^j_1\|_{C^M(S^{n-1})}\prod_{l=2}^{n-k}\|f_l^j\|_{C^{M+2}(S^{n-1})},
		\end{align*}
		for some constant $c_{M,n,k}$, which is finite due to \eqref{eq:finiteSeriesCM}. Thus the series
		\begin{align*}
			g_{i_1,\dots,i_m,s_1,\dots,s_{n-k}}:=	\sum_{j=1}^{\infty}f^j_1\prod_{l=2}^{n-k}\psi_{i_l} \Psi^{i_l}_{s_l}(D^2f^j_l)
		\end{align*}
		converges in the $C^\infty$ topology to a smooth function $g_{i_1,\dots,i_m,s_1,\dots,s_{n-k}}$. In particular, the series converges uniformly on $S^{n-1}$, so from \eqref{equation:RepresentationMuMixedSurfaceArea} we obtain for $K$ smooth with strictly positive Gauss curvature
		\begin{align*}
			\mu(K)=\Theta_{n-k}(F)[K]=\sum_{i_2,\dots,i_{n-k}}^m\sum_{s_2,\dots,s_{n-k}}^{\binom{n}{2}}\int_{S^{n-1}}g_{i_1,\dots,i_m,s_1,\dots,s_{n-k}} dS_{n-1}(K[k],\mathcal{E}^{i_2}_{s_2},\dots,\mathcal{E}^{i_{n-k}}_{s_{n-k}}),
		\end{align*}
		where the ellipsoids $\mathcal{E}^{i}_{s}$ do not depend on $\mu$. As both sides depend continuously on $K$, this representation holds for all $K\in\mathcal{K}(\R^n)$. Now the claim follows by combining the functions with indices corresponding to the same mixed area measures $dS_{n-1}(\cdot[k],\mathcal{E}_1[\alpha_1],\dots,\mathcal{E}_N[\alpha_N])$, $\alpha\in \mathbb{N}^N$ with $|\alpha|=n-k-1$.
	\end{proof}
	
	\begin{proof}[Proof of Theorem \ref{theorem:finiteCombMixedVolumes}]
		This follows from Theorem \ref{theorem:finiteCombMixedSurfaceArea} in combination with \eqref{eq:relationMixedVolumeMixedSurfaceAreaMeasure}: If $\mu$ is a real-valued valuation, then we may assume that the functions $g_\alpha$ in Theorem \ref{theorem:finiteCombMixedSurfaceArea} are real-valued as well, as we can replace them by their real part due to linearity. We can then choose smooth convex bodies $L^\pm_\alpha\in \mathcal{K}(\R^n)$ with strictly positive Gauss curvature such that $g_\alpha=h_{L^+_\alpha}-h_{L^-_\alpha}$, for example by fixing a ball $L^-_\alpha$ such that $g_\alpha+h_{L^-_\alpha}$ is convex with positive definite Hessian, that is, such that it is the support function $h_{L^+_\alpha}$ of a smooth convex body with strictly positive Gauss curvature. Exchanging the role of $L^+_\alpha$ and $L^-_\alpha$, we see that we can always assume that one of these bodies is an ellipsoid. If $\mu$ is in addition an even valuation, then $g_\alpha$ can be chosen to be even by Theorem \ref{theorem:finiteCombMixedSurfaceArea}, which obviously implies that $h_{L^\pm_\alpha}=g_\alpha+h_{L^\mp_\alpha}$ is even as well. Thus the remaining bodies can be chosen to be origin symmetric.
	\end{proof}
	
\section{Finite linear combinations of mixed volumes in the general case}
	\label{section:ContinuousCase}
	In this section we will construct valuations of degree $1\le k\le n-1$ that do not admit a representation as a finite linear combination of mixed volumes. We start with the following construction, which is due to Goodey and Weil \cite{GoodeyWeilDistributionsvaluations1984}. The version below can be found in \cite{AleskerIntroductiontheoryvaluations2018} Proposition 7.0.2.\\
	We refer to \cite{Hoermanderanalysislinearpartial2003} for a background on distributions and only recall that a distribution $T:C^\infty(S^{n-1})\rightarrow\R$ is said to have order at most $m\in\mathbb{N}$ if there exists a constant $C>0$ such that $|T(\phi)|\le C\|\phi\|_{C^m(S^{n-1})}$ for all $\phi\in C^\infty(S^{n-1})$.
	\begin{proposition}
		For every $\mu\in \Val_1(\R^n)$ there exists a unique distribution $\GW(\mu)$ on $S^{n-1}$ of order at most $2$ such that 
		\begin{align*}
			\GW(\mu)[h_K]=\mu(h_K)
		\end{align*}
		for all smooth convex bodies $K\in\mathcal{K}(\R^n)$ with strictly positive Gauss curvature.
	\end{proposition}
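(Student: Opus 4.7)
The plan is to define $\GW(\mu)$ explicitly by extending the prescribed formula via Minkowski additivity, and then to establish the order-$2$ bound through a scaling argument.

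Since $\mu\in\Val_1(\R^n)$, McMullen's polynomial theorem applied to $(\lambda_1,\lambda_2)\mapsto \mu(\lambda_1 K_1+\lambda_2K_2)$ yields a polynomial homogeneous of degree one, hence a linear expression, giving Minkowski additivity $\mu(K_1+K_2)=\mu(K_1)+\mu(K_2)$. For any $f\in C^\infty(S^{n-1})$ and $r>0$ large enough, the matrix $D^2(r+f)=\nabla^2 f+(r+f)\mathrm{Id}$ is positive definite on every tangent space, so $r+f$ is the support function of a smooth convex body $K_{f,r}$ with strictly positive Gauss curvature, and $f=h_{K_{f,r}}-h_{rB_1(0)}$. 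Setting
\begin{align*}
	\GW(\mu)[h_{K_+}-h_{K_-}]:=\mu(K_+)-\mu(K_-)
\end{align*}
for smooth $K_\pm$ with strictly positive Gauss curvature defines a linear functional on all of $C^\infty(S^{n-1})$; well-definedness is immediate, since $h_{K_+}-h_{K_-}=h_{K_+'}-h_{K_-'}$ forces $K_++K_-'=K_+'+K_-$, and Minkowski additivity then gives $\mu(K_+)-\mu(K_-)=\mu(K_+')-\mu(K_-')$.

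The main step is the continuity estimate. Fix $c_n>0$ with $\|D^2 g\|_{\mathrm{op}}\le c_n\|g\|_{C^2(S^{n-1})}$ pointwise for all $g\in C^2(S^{n-1})$, and for $f\neq 0$ set $\epsilon:=\|f\|_{C^2(S^{n-1})}$. Choosing $r:=2c_n\epsilon$ in the construction above, we have $D^2(r+f)\ge(r-c_n\epsilon)\mathrm{Id}>0$, so $r+f=h_{K_f}$ for a smooth body $K_f$ with strictly positive Gauss curvature. The rescaled body $\epsilon^{-1}K_f$ has support function $2c_n+\epsilon^{-1}f$ of $C^0$-norm at most $2c_n+1$, hence lies in the Hausdorff-compact set $\{K\in\mathcal{K}(\R^n):K\subset(2c_n+1)B_1(0)\}$. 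By continuity of $\mu$ there exists $M>0$ independent of $f$ with $|\mu|\le M$ on this set; combining $1$-homogeneity with $|\mu(rB_1(0))|=r|\mu(B_1(0))|$ yields
\begin{align*}
	|\GW(\mu)[f]|\le |\mu(K_f)|+|\mu(rB_1(0))|\le\bigl(M+2c_n|\mu(B_1(0))|\bigr)\,\|f\|_{C^2(S^{n-1})},
\end{align*}
which shows that $\GW(\mu)$ has order at most $2$. Uniqueness follows because every $f\in C^\infty(S^{n-1})$ is a difference of support functions of the prescribed form, so any second distribution agreeing with $\GW(\mu)$ on all such $h_K$ must coincide with it by linearity.

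The main obstacle is the $C^2$-bound, which crucially exploits $1$-homogeneity: rescaling a body whose support function is $C^2$-small to unit scale costs exactly a factor $\epsilon$, while continuity together with Blaschke selection controls the residual $\mu$-value. For higher-degree homogeneity this single-body scaling trick breaks down, and one must instead polarize to $k$ convex bodies and construct a distribution on $(S^{n-1})^k$, which is the approach taken in the Goodey--Weil embedding of $\Val_k(\R^n)$.
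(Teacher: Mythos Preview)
The paper does not give a proof of this proposition; it is quoted with attribution to Goodey--Weil and a reference to Alesker's lecture notes. Your argument is correct and is essentially the standard one: Minkowski additivity of $1$-homogeneous valuations (via McMullen polynomiality) makes the extension to differences of support functions well defined, and the scaling trick---normalizing $K_f$ by $\epsilon^{-1}$ so that it lies in a fixed Hausdorff-compact set, then using $1$-homogeneity to recover the factor $\|f\|_{C^2}$---yields the order-$2$ bound. Your closing remark on why the single-body scaling fails for $k\ge 2$, and why one must instead polarize, is precisely the content of the general Goodey--Weil embedding.
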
	
	Recall that a valuation $\mu\in \Val_1(\R^n)$ is called \emph{uniformly continuous} if $|\mu(K)|\le C\|h_K\|_\infty$. It is easy to see that this is the case if and only if $\GW(\mu)$ is of order $0$. Using this notion, linear combination of mixed volumes admit the following complete characterization for $k=1$. 
	\begin{theorem}[McMullen \cite{McMullenContinuoustranslationinvariant1980} Theorems 3 and 4]
		\label{theorem:UniformlyContValuationsMV}
		$\mu\in\Val_1(\R^n)$ is a linear combination of mixed volumes if and only if $\mu$ is uniformly continuous, that is, if and only if $\GW(\mu)$ is of order $0$.
	\end{theorem}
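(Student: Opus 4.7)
The plan is to prove the theorem by producing an explicit representation of $\mu$ as a difference of exactly two mixed volumes whenever $\GW(\mu)$ is a finite signed Radon measure. The forward direction is immediate from \eqref{eq:relationMixedVolumeMixedSurfaceAreaMeasure}: if $\mu(\cdot) = \sum_i c_i V(\cdot, L_1^i, \ldots, L_{n-1}^i)$, then $\mu(K) = \int_{S^{n-1}} h_K \, d\nu$ with the finite signed Radon measure $\nu := \sum_i c_i S_{n-1}(L_1^i, \ldots, L_{n-1}^i)$, so $|\mu(K)| \le \|\nu\|_{\mathrm{TV}}\|h_K\|_\infty$ and, by uniqueness in the definition of $\GW$, $\GW(\mu) = \nu$ is of order zero.

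For the reverse direction, set $\nu := \GW(\mu)$. Translation invariance of $\mu$, combined with $h_{K+v} - h_K = \langle v, \cdot \rangle$ on $S^{n-1}$, first forces $\int_{S^{n-1}} \langle v, x\rangle \, d\nu(x) = 0$ for every $v \in \R^n$; that is, $\nu$ has vanishing centroid. The key step is to realise $\nu$ as a difference $S_{n-1}(K^+) - S_{n-1}(K^-)$ of surface area measures of two convex bodies: combined with \eqref{eq:relationMixedVolumeMixedSurfaceAreaMeasure} this will immediately produce $\mu(K) = V(K, K^+[n-1]) - V(K, K^-[n-1])$ for every smooth body with strictly positive Gauss curvature, and hence on all of $\mathcal{K}(\R^n)$ by continuity. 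I would first take the Hahn decomposition $\nu = \nu^+ - \nu^-$ into positive Radon measures; since $\nu$ has vanishing centroid, $\nu^+$ and $\nu^-$ must share a common first moment $c := \int x \, d\nu^\pm$. Next I would pick an auxiliary positive Radon measure $\sigma$ of full support on $S^{n-1}$ with $\int x \, d\sigma = -c$ --- concretely, a point mass of weight $|c|$ at $-c/|c|$ added to the rotation-invariant uniform measure on $S^{n-1}$ --- and set $\tilde{\nu}^\pm := \nu^\pm + \sigma$. Both $\tilde{\nu}^\pm$ are then positive, have vanishing first moment, and are not concentrated on any great subsphere, so Minkowski's existence theorem produces convex bodies $K^\pm$ with $S_{n-1}(K^\pm) = \tilde{\nu}^\pm$, giving $\nu = S_{n-1}(K^+) - S_{n-1}(K^-)$ as desired.

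The hard part will be the preliminary equivalence between $\GW(\mu)$ having order zero and the uniform bound $|\mu(K)| \le C\|h_K\|_\infty$. The former implies the latter by specialising to smooth support functions; the converse requires extending the sup-norm bound from support functions of smooth strictly convex bodies to arbitrary $\phi \in C(S^{n-1})$. I would exploit the Minkowski-additivity of degree-one valuations --- which makes $h_{K_1} - h_{K_2} \mapsto \mu(K_1) - \mu(K_2)$ a well-defined linear functional on differences of support functions --- together with the Hausdorff-metric estimate $|\mu(K_1) - \mu(K_2)| \le C\|h_{K_1} - h_{K_2}\|_\infty$ for such valuations, and the density of such differences in $C(S^{n-1})$ (every smooth function becomes a support function after adding a sufficiently large multiple of $h_{B_1(0)}$). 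A Riesz representation argument would then deliver the signed Radon measure $\GW(\mu)$.
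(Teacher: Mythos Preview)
The paper does not supply a proof of this theorem: it is quoted from McMullen \cite{McMullenContinuoustranslationinvariant1980} and used as a black box in Section~\ref{section:ContinuousCase}. So there is no in-paper argument to compare against. Your proposal is therefore being assessed on its own merits.

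Your approach is essentially the classical one, and it is correct. The forward direction via \eqref{eq:relationMixedVolumeMixedSurfaceAreaMeasure} is immediate. For the converse, the reduction to Minkowski's existence theorem is exactly the right idea: once $\GW(\mu)=\nu$ is a centred signed measure, adding a suitable full-support centred positive measure to both parts of the Hahn decomposition produces two admissible data for Minkowski's theorem, and the resulting bodies give $\mu(K)=V(K,K^{+}[n-1])-V(K,K^{-}[n-1])$. Two small points worth tightening: handle the case $c=0$ explicitly (then $\sigma$ can simply be the uniform measure), and note that the normalisation in \eqref{eq:relationMixedVolumeMixedSurfaceAreaMeasure} may introduce a harmless constant factor.

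The one place your sketch is thin is precisely the step you flag as ``the hard part'': passing from the bound $|\mu(K)|\le C\|h_K\|_\infty$ on support functions to a sup-norm bound on the linear extension $h_{K_1}-h_{K_2}\mapsto\mu(K_1)-\mu(K_2)$. This does not follow formally from boundedness on the cone of support functions alone, since that cone has empty interior in $C(S^{n-1})$ with the sup norm. You will need to use Minkowski additivity more carefully---for instance, show first that the hypothesis forces the Lipschitz estimate $|\mu(K_1)-\mu(K_2)|\le C' d_H(K_1,K_2)$ (McMullen's original formulation of ``weak continuity''), which does give the required bound on differences. The paper asserts this equivalence is ``easy to see'' but does not spell it out either; it is worth writing down a clean argument here rather than leaving it as a sketch.
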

	Using this description, it is not difficult to obtain examples of elements of $\Val_1(\R^n)$ that are not linear combinations of mixed volumes. We will need the following specific example. Recall that the $k$-th area measure of a convex body $K$ is given by $S_k(K):=S(K[k],B_1(0)[n-k-1])$, where $B_1(0)$ denotes the unit ball in $\R^n$ centered at the origin.
	\begin{lemma}
		\label{lemma:nonUniformlyContinuousValuationDeg1}
		Let $\psi\in C^\infty([-1,1])$ be a function with $\psi(t)=0$ for $|t|\ge \frac{2}{3}$ and $\psi(t)=1$ for $|t|\le \frac{1}{3}$. Define $f\in C([-1,1])$ by $f(t)=\sqrt{|t|}(1-t^2)^{-\frac{n-3}{2}}\psi(t)$. The valuation $\mu\in \Val_1(\R^n)$ given by 
		\begin{align*}
			\mu(K):=\int_{S^{n-1}}f(x_n)dS_1(K,x), \quad K\in\mathcal{K}(\R^n),
		\end{align*} 
		is not uniformly continuous.
	\end{lemma}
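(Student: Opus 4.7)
The plan is to use the characterization stated before the lemma: $\mu$ is uniformly continuous if and only if $\GW(\mu)$ is a distribution of order $0$. Conceptually, the obstruction to order $0$ is the $\sqrt{|t|}$ singularity of $f$ at $t=0$, whose distributional second derivative on the equator $\{x_n=0\}$ fails to be a (signed) measure. Technically I would make this precise by testing $\GW(\mu)$ against a concentrating family of smooth functions around the equator.

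First I would put $\GW(\mu)$ into closed form. Taking $n-2$ copies of the unit ball in \eqref{eq:MixedsurfaceAreaMeasureSmooth} and using $D_{n-1}(A,\mathrm{Id},\dots,\mathrm{Id})=\frac{1}{n-1}\tr(A)$ together with $D^2 h_{B_1(0)}=\mathrm{Id}$ on $TS^{n-1}$ gives $dS_1(K)=\frac{1}{n-1}(\Delta_{S^{n-1}}h_K+(n-1)h_K)\,d\mathcal{H}^{n-1}$ for smooth $K$ with strictly positive Gauss curvature. Every smooth $\phi$ on $S^{n-1}$ is a difference of two such support functions (add a sufficiently large multiple of $h_{B_1(0)}$) and $f\in L^1(S^{n-1})$, so the uniqueness assertion in the preceding proposition forces
\begin{align*}
\GW(\mu)[\phi]=\frac{1}{n-1}\int_{S^{n-1}}f(x_n)\bigl(\Delta_{S^{n-1}}\phi+(n-1)\phi\bigr)\,d\mathcal{H}^{n-1},\quad \phi\in C^\infty(S^{n-1}).
\end{align*}

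Next I would fix an even, non-negative $\chi\in C_c^\infty((-1,1))$ with $\chi(0)=1$ that is strictly decreasing on $[0,1]$, and set $\phi_\epsilon(x):=\chi(x_n/\epsilon)$ for $0<\epsilon<1/3$, so that $\|\phi_\epsilon\|_\infty=1$. Since $\phi_\epsilon$ depends only on $t:=x_n$, the spherical Laplacian reduces to $(1-t^2)\phi_\epsilon''(t)-(n-1)t\phi_\epsilon'(t)$; the polar integration formula on $S^{n-1}$ together with the specific form $f(t)=\sqrt{|t|}(1-t^2)^{-(n-3)/2}\psi(t)$ causes the weight $(1-t^2)^{(n-3)/2}$ to cancel against $(1-t^2)^{-(n-3)/2}$, yielding
\begin{align*}
\GW(\mu)[\phi_\epsilon]=\frac{\vol(S^{n-2})}{n-1}\int_{-1}^{1}\sqrt{|t|}\psi(t)\bigl[(1-t^2)\phi_\epsilon''(t)-(n-1)t\phi_\epsilon'(t)+(n-1)\phi_\epsilon(t)\bigr]\,dt.
\end{align*}

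The substitution $t=\epsilon s$, which is valid because $\psi\equiv 1$ on $[-1/3,1/3]$, shows that the three summands scale as $\epsilon^{-1/2}$, $\epsilon^{3/2}$, and $\epsilon^{3/2}$ respectively, so
\begin{align*}
\GW(\mu)[\phi_\epsilon]=\frac{\vol(S^{n-2})}{(n-1)\sqrt{\epsilon}}\int_{-1}^{1}\sqrt{|s|}\chi''(s)\,ds+O(\epsilon^{3/2}).
\end{align*}
One integration by parts combined with the evenness of $\chi$ reduces the leading coefficient to $-\int_0^1\chi'(s)/\sqrt{s}\,ds$, which is strictly positive since $\chi'<0$ on a set of positive measure in $(0,1)$. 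Hence $|\GW(\mu)[\phi_\epsilon]|\to\infty$ while $\|\phi_\epsilon\|_\infty\le 1$, so $\GW(\mu)$ is not of order $0$ and $\mu$ is not uniformly continuous. The main hurdle is the first step, namely deriving and justifying the closed-form distributional expression for $\GW(\mu)$; once that is in place, the scaling analysis and the non-vanishing of the leading coefficient are elementary.
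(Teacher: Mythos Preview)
Your proposal is correct and follows essentially the same approach as the paper: both derive the closed form $\GW(\mu)[\tilde\phi]=c\int_{S^{n-1}}f(x_n)\bigl(\Delta_{S^{n-1}}\tilde\phi+(n-1)\tilde\phi\bigr)\,d\mathcal{H}^{n-1}$, pass to spherical cylinder coordinates so that the weight $(1-t^2)^{(n-3)/2}$ cancels, and then test against bump functions concentrating at $t=0$ to detect the non-integrable second derivative of $\sqrt{|t|}$. The only technical difference is that the paper integrates by parts twice (producing the explicit singular density $-\tfrac{1}{4}t^{-3/2}$ and then estimating $\int_\epsilon^{4\epsilon}t^{-3/2}\,dt$), whereas you use the substitution $t=\epsilon s$ to read off the $\epsilon^{-1/2}$ scaling directly and then a single integration by parts to check the leading coefficient is nonzero; both arguments are equally short.
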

	\begin{proof}
		We have to show that $\GW(\mu)$ is not bounded with respect to the supremum norm $\|\cdot\|_\infty$. Let $\phi\in C^\infty_c((-1,1))$ be a function with $\supp\phi\subset [\frac{\epsilon}{2},\frac{1}{3}]$ for some $\epsilon>0$. Set $\tilde{\phi}(x):=\phi(x_n)$. Using the definition of the mixed area measure, it is easy to see that
		\begin{align*}
			\GW(\mu)[\tilde{\phi}]=\int_{S^{n-1}}f(x_n)\left[\Delta_{S^{n-1}}\tilde{\phi}(x)+(n-1)\tilde{\phi}(x)\right]d\mathcal{H}^{n-1}(x).
		\end{align*}
		By a change to spherical cylinder coordinates, this reduces to
		\begin{align*}
			\GW(\mu)[\tilde{\phi}]=&\omega_{n-2}\int_{-1}^1f(t)\left[(1-t^2)\phi''(t)-(n-1)t\phi'(t)+(n-1)\phi(t)\right](1-t^2)^{\frac{n-3}{2}}dt\\
			=&\omega_{n-2}\int_0^1\sqrt{t}\left[(1-t^2)\phi''(t)-(n-1)t\phi'(t)+(n-1)\phi(t)\right]dt.
		\end{align*}
		Integrating by parts, we thus obtain
		\begin{align*}
				\GW(\mu)[\tilde{\phi}]=&\omega_{n-2}\int_0^1\left[\sqrt{t}(1-t^2)\right]''\phi(t)+\frac{3}{2}(n-1)\sqrt{t}\phi(t)+(n-1)\phi(t)dt\\
				=&\omega_{n-2}\int_0^1-\frac{1}{4\sqrt{t}^3}\phi(t)-\frac{15}{4}\sqrt{t}\phi(t)+\frac{3}{2}(n-1)\sqrt{t}\phi(t)+(n-1)\phi(t)dt.
		\end{align*}
		Except for the first expression, these terms can obviously be bounded by a constant multiple of $\|\phi\|_\infty$. Consider the term
		\begin{align*}
			T(\phi):=\int_0^1\frac{\phi(t)}{\sqrt{t}^3}dt.
		\end{align*}
		Assume that $\phi$ is non-negative with $\|\phi\|_\infty\le 1$ and satisfies $\phi(t)=1$ for $t\in (\epsilon,4\epsilon)$. Then 
		\begin{align*}
			T(\phi)\ge \int_\epsilon^{2\epsilon}\frac{1}{\sqrt{t}^3}dt=\left[-\frac{2}{\sqrt{t}}\right]_\epsilon^{4\epsilon}=\frac{1}{\sqrt{\epsilon}}.
		\end{align*}
		As this diverges for $\epsilon\rightarrow0$, $\GW(\mu)[\tilde{\phi}]$ cannot by bounded by a constant multiple of $\|\tilde{\phi}\|_\infty$ for all $\phi\in C_c^\infty((-1,1))$. Thus $\mu$ is not uniformly continuous.
	\end{proof}

	\begin{corollary}
		Mixed volumes span a proper subspace of $\Val_k(\R^n)$ for all $1\le k\le n-1$.
	\end{corollary}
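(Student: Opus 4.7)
The plan is to generalize the construction of Lemma \ref{lemma:nonUniformlyContinuousValuationDeg1} to all degrees $1\le k\le n-1$ by integrating the same singular function against the $k$-th area measure, and to reduce the case $k\ge 2$ back to $k=1$ via a derivation.

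For each $1\le k\le n-1$, let $f$ be the function from Lemma \ref{lemma:nonUniformlyContinuousValuationDeg1} and define
\begin{align*}
    \mu_k(K):=\int_{S^{n-1}}f(x_n)\,dS_{n-1}(K[k],B_1(0)[n-k-1],x),\qquad K\in\mathcal{K}(\R^n).
\end{align*}
By the properties of mixed area measures recalled in Section \ref{section:preliminaries}, $\mu_k$ is a continuous, translation invariant, $k$-homogeneous valuation, that is, $\mu_k\in\Val_k(\R^n)$. For $k=1$ this is exactly the valuation from Lemma \ref{lemma:nonUniformlyContinuousValuationDeg1}, so by Theorem \ref{theorem:UniformlyContValuationsMV} it is not a finite linear combination of mixed volumes.

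For $k\ge 2$, consider the derivation $\Lambda_B\mu(K):=\frac{d}{dt}|_{t=0}\mu(K+tB)$ with $B:=B_1(0)$, which is well defined on $\Val(\R^n)$ by McMullen's polynomiality theorem and maps $\Val_k(\R^n)$ to $\Val_{k-1}(\R^n)$. Two elementary computations drive the argument. First, multilinearity of the mixed volume gives
\begin{align*}
    \Lambda_B V(K[k],L_1,\dots,L_{n-k})=k\,V(K[k-1],B,L_1,\dots,L_{n-k}),
\end{align*}
so $\Lambda_B$ sends finite linear combinations of mixed volumes to finite linear combinations of mixed volumes. Second, expanding $S_{n-1}((K+tB)[k],B[n-k-1],\cdot)$ using multilinearity of $S_{n-1}$ yields $\Lambda_B\mu_k=k\mu_{k-1}$. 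Iterating $k-1$ times gives $\Lambda_B^{k-1}\mu_k=k!\,\mu_1$.

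If $\mu_k$ were a finite linear combination of mixed volumes, then by the first computation so would $\Lambda_B^{k-1}\mu_k=k!\,\mu_1$, contradicting the $k=1$ case. Hence $\mu_k\in\Val_k(\R^n)$ is not in the span of mixed volumes, proving the corollary. There is no real obstacle in this plan; the only points requiring minor care are the verification that $\mu_k$ is genuinely a continuous valuation (immediate from weak*-continuity of $S_{n-1}$ and continuity of $f$) and that the two multilinear expansions above are consistent with the definition of $\Lambda_B$ on all of $\Val_k(\R^n)$.
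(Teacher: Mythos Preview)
Your proof is correct and follows essentially the same approach as the paper: define $\mu_k$ by integrating $f$ against the $k$-th area measure, and reduce $k\ge 2$ to $k=1$ by differentiating in the direction of the unit ball, using that this operation preserves the span of mixed volumes. The only cosmetic difference is that the paper takes the $(k-1)$-st derivative at once while you iterate $\Lambda_B$ one step at a time.
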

	\begin{proof}
		It is sufficient to construct a $k$-homogeneous valuation that is not a linear combination of mixed volumes for all $1\le k\le n-1$.\\
		Let $f\in C([-1,1])$ denote the function defined in Lemma \ref{lemma:nonUniformlyContinuousValuationDeg1}. For $k=1$, the valuation $\mu_1(K):=\int_{S^{n-1}}f(x_n)dS_1(K,x)$ is not uniformly continuous by Lemma \ref{lemma:nonUniformlyContinuousValuationDeg1} and thus not a linear combination of mixed volumes by Theorem \ref{theorem:UniformlyContValuationsMV}.\\
		Now let $2\le k\le n-1$ and consider the valuation $\mu_k\in\Val_k(\R^n)$ defined by
		\begin{align*}
			\mu_k(K)=\int_{S^{n-1}}f(x_n)dS_k(K,x).
		\end{align*}
		If this was a linear combination of mixed volumes, then this would also hold for the valuation
		\begin{align*}
			K\mapsto \frac{d^{k-1}}{dt^{k-1}}\Big|_0\mu_k(K+tB_1(0))=(k-1)!\int_{S^{n-1}}f(x_n)dS_1(K,x),
		\end{align*}
		which is a contradiction. Thus $\mu_k$ can not be represented as a finite linear combination of mixed volumes.
	\end{proof}
	
\section{Questions}
	\label{section:questions}
	In this section we collect some questions related to Theorem \ref{theorem:finiteCombMixedVolumes}. Let $\mathcal{L}_{k,m}\subset\Val_k(\R^n)$ denote the set of linear combinations of at most $m$ mixed volumes. Corollary \ref{theorem:densityFiniteComb} implies that $\mathcal{L}_{k,m}$ is dense in $\Val_k(\R^n)$ for $m\ge 2\binom{\binom{n+1}{2}+n-k-1}{n-k-1}$.
	\begin{question}
		What is the minimal number $N_{n,k}$ such that $\mathcal{L}_{k,N_{n,k}}\subset\Val_k(\R^n)$ is dense? 
	\end{question}
	Note that for $k=1$ and $k=n-1$ the answer is $N_{n,k}=2$ for $n\ge 2$. This follows from results due to Goodey and Weil \cite{GoodeyWeilDistributionsvaluations1984} for $k=1$ and McMullen \cite{McMullenContinuoustranslationinvariant1980} for $k=n-1$.\\
	
	The space $\Val(\R^n)^{sm}$ is equipped with a number of algebraic structures, for example the Alesker product \cite{Aleskermultiplicativestructurecontinuous2004} and the Bernig-Fu convolution \cite{BernigFuConvolutionconvexvaluations2006}, which are in turn related by the Alesker-Fourier transform \cite{AleskerFouriertypetransform2011}. Both of these products admit an interpretation in terms of mixed volumes. In particular, Theorem \ref{theorem:finiteCombMixedVolumes} implies that this already provides a complete description of these structures. However, in contrast to the Bernig-Fu convolution, the description of the Alesker product involves mixed volumes defined on $\R^{n}\times\R^n$. 
	\begin{question}
		\label{question:DescrAleskerProductFourier}
		Find an explicit description of the Alesker product and the Alesker-Fourier transform in terms of linear combinations of smooth mixed volumes on $\R^n$. 
	\end{question}
	
	Invariant valuations play an important role in integral geometry. For the unitary group $\U(n)$ acting on $\C^n$, the space $\Val(\C^n)^{\U(n)}$ of of $\U(n)$-invariant valuations in $\Val(\C^n)$ was described by Alesker \cite{AleskerHardLefschetztheorem2003}, Fu \cite{FuStructureunitaryvaluation2006}, and Bernig and Fu \cite{BernigFuHermitianintegralgeometry2011}. In particular, there exist a number of different bases of this space relying on different integral geometric and differential geometric constructions. As all of these valuations are smooth, Theorem \ref{theorem:finiteCombMixedVolumes} implies that they admit a description in terms of mixed volumes. It would be interesting to obtain a description of this space and its algebra structure in terms suitable combinations of mixed volumes. This leads to the following question, which is due to Kotrbat\'y.
	\begin{question}
		\label{question:GeneratorValUnitary}
		Describe the generators of the algebra $\Val(\C^n)^{\U(n)}$ in terms of linear combinations of mixed volumes.
	\end{question}
	Note that $\Val(\C^n)^{\U(n)}$ is generated as an algebra by two elements. One of these elements is an intrinsic volume (either of degree $1$ for the Alesker product or of degree $2n-1$ for the Bernig-Fu convolution), which can therefore easily be described in terms of mixed volumes. Question \ref{question:GeneratorValUnitary} thus asks for a suitable representation for the second generator.

	\bibliography{../../library/library.bib}

\Addresses
\end{document}